\NeedsTeXFormat{LaTeX2e}
\NeedsTeXFormat{LaTeX2e}
\documentclass{amsart}
\usepackage{amssymb}
\usepackage{amscd}
\usepackage[latin1]{inputenc}
\usepackage{amsmath}
\usepackage{amsfonts}
\usepackage{latexsym}
\usepackage{verbatim}
\usepackage{graphicx}
\usepackage{epsfig}
\usepackage{color}
\usepackage[normalem]{ulem}

\newtheorem{theorem}{Theorem}[section]

\newtheorem{lemma}[theorem]{Lemma}

\newtheorem{proposition}[theorem]{Proposition}

\theoremstyle{definition}
\newtheorem{definition}[theorem]{Definition}

\newcommand{\sft}{SFT}

\newcommand{\zz}{{\mathbb Z}}
\newcommand{\nz}{{\mathbb N}}

\newcommand{\zt}{{\zz^2}}

\newcommand{\ag}{\ensuremath{{\mathcal A}} }

\numberwithin{equation}{section}

%% Macros for comments

\begin{document}
\title[On intrinsic ergodicity of factors of $\mathbb{Z}^d$ subshifts]{On intrinsic ergodicity of factors of $\mathbb{Z}^d$ subshifts}

\begin{abstract}

It is well-known that any $\mathbb{Z}$ subshift with the specification property has the property that every factor is intrinsically ergodic, i.e., every factor has a unique factor of maximal entropy. In recent work, other $\mathbb{Z}$ subshifts have been shown to possess this property as well, including $\beta$-shifts and a class of $S$-gap shifts. We give two results that show that the situation for $\mathbb{Z}^d$ subshifts with $d >1 $ is quite different. First, for any $d>1$, we show that any $\mathbb{Z}^d$ subshift possessing a certain mixing property must have a factor with positive entropy which is not intrinsically ergodic. In particular, this shows that for $d>1$, $\mathbb{Z}^d$ subshifts with specification cannot have all factors intrinsically ergodic. We also give an example of a $\mathbb{Z}^2$ shift of finite type, introduced by Hochman, which is not even topologically mixing, but for which every positive entropy factor is intrinsically ergodic.

\end{abstract}

\date{}
\author{Kevin McGoff}
\address{Kevin McGoff\\
Department of Mathematics\\
Duke University\\
Durham, NC 27708-0320}
\email{mcgoff@math.duke.edu}
\urladdr{http://www.math.duke.edu/$\sim$mcgoff/}
\author{Ronnie Pavlov}
\address{Ronnie Pavlov\\
Department of Mathematics\\
University of Denver\\
2280 S. Vine St.\\
Denver, CO 80208}
\email{rpavlov@du.edu}
\urladdr{www.math.du.edu/$\sim$rpavlov/}
\thanks{}
\keywords{$\mathbb{Z}^d$; shift of finite type; sofic; multidimensional}
\renewcommand{\subjclassname}{MSC 2010}
\subjclass[2010]{Primary: 37B50; Secondary: 37B10, 37A15}
%below are the definitions for some subject classification numbers
%22D40 Ergodic theory on groups 
%37A05 Measure-preserving transformations 
%37A15 General groups of measure preserving transformations 
%37A35 Entropy and other invariants, isomorphism, classification 
%37B10 symbolic dynamics
%37B40 topological entropy
%37B50 multi-dimensional shifts of finite type, tiling systems 
%37C40 smooth ergodic theory, invariant measures
%37C45 dimension theory of dynamical systems
%37C85 Dynamics of group actions other than Z and R, and foliations 
%37C99 smooth dynamical systems, general theory
%37D35 thermodynamic formalism, variational principles, equilibrium states
\maketitle
%\tableofcontents

\section{Introduction}
\label{intro} %\kmmargin{Some journals don't allow references in abstracts.}

The well-known Variational Principle relates the concepts of measure-theoretic and topological entropy for dynamical systems, stating that the topological entropy of any dynamical system is the supremum of the measure-theoretic entropies of all invariant measures on that system. In general, there may be no measures achieving that supremum, but if the system is expansive, then at least one such measure, called a measure of maximal entropy, must exist (\cite{misiurewicz}).

A topological dynamical system is said to be intrinsically ergodic (\cite{parry}, \cite{weiss}) if it has a unique measure of maximal entropy. It is well-known that for $\zz$ (i.e. one-dimensional) subshifts, strong enough topological mixing conditions imply intrinsic ergodicity; for instance, it was shown in \cite{bowen} that the specification property implies intrinsic ergodicity. The specification property is also clearly preserved under factor maps, which implies that for a $\zz$ subshift with specification, every factor is intrinsically ergodic. In particular, since every topologically mixing $\zz$ shift of finite type has the specification property, every such system is also intrinsically ergodic, along with all of its factors. These facts lead to a natural question (\cite{boyle}), asked by Thomsen, of whether every factor of a $\beta$-shift is intrinsically ergodic. This question was answered in the affirmative in \cite{CT}, where the authors gave a new sufficient condition for intrinsic ergodicity that is preserved under factor maps. Informally, their condition imposes specification on ``most words'' in the subshift, in some quantifiable way.

Strictly speaking, for a zero-entropy system, every invariant measure is trivially a measure of maximal entropy, and so for such systems intrinsic ergodicity is equivalent to the existence of a unique invariant measure, also known as unique ergodicity. Since intrinsic ergodicity of zero-entropy systems is therefore a somewhat degenerate case, in this paper we will focus on the question of whether all positive entropy factors of a subshift are intrinsically ergodic. This slight restriction of scope changes none of the context of the work described above, since all subshifts with specification and all subshifts treated in \cite{CT} (see Proposition 2.4 there) have positive entropy. 

In the current work, we study the class of $\mathbb{Z}^d$ subshifts ($d > 1$) for which every positive entropy factor is intrinsically ergodic, proving two results which are somewhat surprising given the results for $d = 1$ summarized above. The first is that any $\mathbb{Z}^d$ subshift with a certain topological mixing property (see Definition~\ref{dstardef}) must have a non-intrinsically ergodic factor, which is antithetical to the previously described results for $d = 1$.

\begin{theorem}\label{badfactor}
For any $d > 1$ and any $\mathbb{Z}^d$ subshift $X$ that has the D*-condition and does not consist of a single fixed point, there exists a factor map $\phi$ so that $h(\phi(X)) > 0$ and $\phi(X)$ is not intrinsically ergodic.
\end{theorem}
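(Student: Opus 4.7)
The plan is to construct an explicit block-code factor $\phi: X \to Y$ so that $Y = \phi(X)$ is a $\zd$ subshift with positive entropy but admits at least two distinct measures of maximal entropy. The existence of such target subshifts is well-established in dimension $d \geq 2$ (e.g., Burton--Steif-type strongly irreducible SFTs with multiple MMEs), and the D*-condition on $X$ should provide enough uniform mixing to encode one as a factor.

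First, I would use the hypothesis that $X$ is not a single fixed point to find two distinct allowed patterns $u_0, u_1$ on some common shape $S \subset \zd$. By the D*-condition, I can fix a separation scale $N$ such that $X$-patterns on shapes mutually separated by distance at least $N$ may be realized simultaneously in a single point of $X$. Next I would partition $\zd$ into ``cells'' of side length $L$ much larger than $N + \mathrm{diam}(S)$ and define $\phi$ as a block code that reads off from each cell a symbol in a small alphabet $B$, encoding which member of a chosen family of $X$-patterns appears at a marked sub-location. Positive entropy for $Y$ then follows immediately, since the D*-condition guarantees that each cell can independently carry at least two distinct labels, giving $h(Y) \geq (\log 2)/L^d > 0$.

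The crux of the argument, and what I expect to be the main obstacle, is to design the local rules of the block code so that $Y$ inherits the ``two-phase'' structure needed for two distinct MMEs --- for example, by arranging that $Y$ projects onto, or itself is, a Burton--Steif-style $\zd$ SFT known to possess multiple MMEs. Once that is in place, one exhibits two shift-invariant measures of maximal entropy on $Y$, each concentrated on configurations biased toward one of the two phases, proving that $Y$ is not intrinsically ergodic. The technical difficulty is to engineer sufficient local constraint within $Y$ to force a Peierls-type surface-tension vs.\ bulk-entropy competition, without simultaneously destroying positive entropy. The hypothesis $d \geq 2$ enters essentially here, since no such multi-phase structure can coexist with topological mixing at equal entropy in $d = 1$, where strong mixing already implies intrinsic ergodicity.
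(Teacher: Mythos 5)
Your high-level strategy is the same as the paper's: the authors also realize the ``bad factor'' as a Widom--Rowlinson-type SFT (a Burton--Steif-flavored model with a $+$/$-$ phase transition established by a Peierls argument). But the step you explicitly defer --- designing the block code so that the image actually \emph{is} such an SFT --- is precisely where all the work lies, and as sketched your construction has a fatal flaw: a factor map is a sliding block code and must commute with the shift, so you cannot ``partition $\mathbb{Z}^d$ into cells of side length $L$'' and read off one symbol per cell. There is no shift-equivariant choice of cell boundaries, and a local window has no way of knowing where a cell begins. The paper resolves this with markers: it first proves (Lemma~\ref{dstarremoval}, using the D*-condition and a grid of planted patterns) that one can forbid a single pattern $w$ from $X$ and retain a subshift $X'$ of positive entropy, and then invokes Hochman's marker proposition on $X'$ to obtain two patterns $w_+, w_-$ on a common shape whose self-overlaps are controlled, so that ``intentional placements'' can be recognized locally and unambiguously. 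Without some such synchronization device, your map $\phi$ is not well defined.

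A second gap is surjectivity. For the conclusion you need $\phi(X)$ to equal (or at least contain a positive-entropy, non-intrinsically-ergodic subshift of) the target $W_{R_1,R_2}$; if $\phi(X)$ is a proper subsystem, the two measures of maximal entropy of the ambient model need not both live on the image. The paper's proof of $\phi(X) \supseteq W_{R_1,R_2}$ is delicate: given $y \in W_{R_1,R_2}$ it builds a preimage by placing $w_\pm$ at the nonzero sites of $y$ (possible by the D*-condition since those sites are far apart) and then saturating the rest of the point with a dense grid of copies of the forbidden pattern $w$, so that no \emph{unintended} occurrence of $w_\pm$ (which lie in $L(X')$ and hence contain no $w$) can create spurious nonzero output symbols. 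Your sketch addresses neither the spurious-occurrence problem nor why the image exhausts the target. Finally, a smaller point: the inner interaction radius $R_1$ of the target model is forced to be large (comparable to the marker size plus the D*-constant), and the literature you would cite for non-intrinsic ergodicity treats only $R_1=1$; this is why the paper includes its own Peierls argument for general $R_1$ (Theorem~\ref{WRnotIE}).
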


Our second main result shows that there do exist $\mathbb{Z}^d$ subshifts (in fact shifts of finite type) for which every positive entropy factor is intrinsically ergodic. The subshifts we consider are examples of Hochman (\cite{hochman}) and are not topologically mixing; in fact, they have a forced hierarchical structure similar to substitutionally defined SFTs in the literature (\cite{mozes}, \cite{robinson}).

\begin{theorem}\label{goodfactors}
There exist $\mathbb{Z}^2$ shifts of finite type with arbitrarily large entropy for which every factor with positive topological entropy is intrinsically ergodic.
\end{theorem}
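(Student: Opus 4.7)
The plan is to exploit the rigid hierarchical structure that Hochman imposes on $X$. Every configuration in $X$ decomposes in an essentially unique way into a zero-entropy ``skeleton'' of nested blocks at geometrically growing scales (analogous to the Robinson and Mozes tilings), together with freely chosen symbols from an auxiliary alphabet of size $N$ at a prescribed sparse set of ``free'' positions within each block. Enlarging $N$ makes the topological entropy of $X$ arbitrarily large, which takes care of that aspect of the statement.

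I would first verify that $X$ itself has a unique MME $\nu_0$. Letting $\pi: X \to S$ denote the skeleton factor, the standard entropy decomposition $h(\nu) = h(\pi_* \nu) + h(\nu \mid \pi^{-1}\mathcal{B}_S)$ together with $h(S) = 0$ reduces the maximization of $h(\nu)$ to that of the conditional entropy of the free symbols given the skeleton. Since the SFT constraints permit those symbols to be chosen independently within each configuration, $h(\nu)$ is maximized precisely when the free symbols are conditionally i.i.d.\ uniform, giving a unique $\nu_0$.

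For a factor $\phi: X \to Y$ with $h(Y) > 0$, I would use the relative variational principle
\[
h(Y) \;=\; \sup_{\nu \text{ inv.\ on } X} \bigl[\, h(\nu) - h(\nu \mid \phi^{-1}\mathcal{B}_Y) \,\bigr],
\]
so that every MME on $Y$ has the form $\phi_* \nu$ for some $\nu$ achieving the supremum. The aim is to show all such $\nu$ share the same push-forward, even though $\nu$ itself need not be unique. The essential observation is that $\phi$ has fixed coding radius while Hochman's blocks grow geometrically, so at deep enough scales the skeleton is determined by any image $\phi(x)$, and $\phi$ acts on each block essentially as a local code on its free symbols. The hypothesis $h(Y) > 0$ forces a positive density of free positions to be detected by $\phi$, and at those ``visible'' positions the relative-entropy functional is maximized precisely when the marginals are uniform -- pinning down $\phi_* \nu$ uniquely.

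The main obstacle is the passage from the global relative-entropy functional to the scale-by-scale ``local code'' picture used in the last step. This requires a careful multi-scale bookkeeping showing that invisible free symbols contribute nothing to $\phi_*\nu$ but can absorb all non-uniqueness of the lift $\nu$, while the visible ones must be conditionally uniform at any MME. It is also where the positivity of $h(Y)$ is crucial: without it, $\phi$ could in principle collapse all free information, and MMEs on $Y$ could then be arbitrary invariant measures of the skeleton, which is not intrinsically ergodic on its own.
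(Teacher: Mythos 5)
Your outline matches the paper's philosophy (zero-entropy hierarchical skeleton, free symbols, entropy maximized by conditional uniformity, fixed coding radius versus geometrically growing blocks), but the two steps you flag as delicate are exactly where the substance lies, and as written the proposal has genuine gaps. First, even the uniqueness of the MME on $X$ itself does not follow from ``free symbols should be i.i.d.\ uniform'': since the skeleton $X_1$ is not uniquely ergodic (it carries exceptional points and invariant measures with varying subsquare frequencies), one must also show that there is a \emph{unique} invariant measure on the skeleton maximizing the density of free positions. The paper does this via an ergodic-theorem argument showing any such measure gives zero mass to the exceptional points, together with a rigidity lemma (a measure vanishing on exceptional points is determined by its values on level-$N$ subsquares, whose frequencies are linked across scales by $\nu(P_{N_1})=4^{N_2-N_1}\nu(P_{N_2})$). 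Your proposal silently assumes the skeleton marginal is pinned down.

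Second, your ``visible free positions'' picture is at the wrong granularity, and the claim that ``at deep enough scales the skeleton is determined by any image $\phi(x)$'' is neither needed nor true in general (a factor can collapse the skeleton entirely). The correct combinatorial object is the set of $\phi$-images of entire level-$2n$ subsquares ($n$ the radius of $\phi$): if there are $m$ distinct such images, the paper shows $\phi$ factors through an intermediate SFT $Y_{m,2n}$ in which each level-$2n$ subsquare carries one of $m$ labels, and that $h(\phi(X))=h(Y_{m,2n})=\alpha_{2n}\log m$. Crucially, the unique MME of $\phi(X)$ is the push-forward of the measure making these $m$ \emph{image classes} uniform, which is generally \emph{not} $\phi_*$ of the MME of $X$ (the classes need not have equal cardinality), so an argument phrased in terms of uniformity of the original free symbols at ``detected'' positions cannot close. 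Finally, the multi-scale bookkeeping you defer --- showing that any measure whose skeleton marginal is wrong, or whose conditional distribution on level-$N_0$ subsquares is non-uniform for some $N_0$, has strictly smaller entropy --- is the technical core of the paper (a partition of $[1,N]^2$ into whole subsquares, boundary cells, and deterministic cells, with subadditivity of entropy); acknowledging it as ``the main obstacle'' leaves the proof incomplete at its decisive step.
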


\section{Definitions and preliminaries}
\label{defns}

Let $A$ denote a finite set, which we will refer to as an alphabet. 

\begin{definition}
A \textbf{pattern} over $A$ is a member of $A^S$ for some $S \subset \mathbb{Z}^d$, which is said to have \textbf{shape} $S$. For $d=1$ and $S$ an interval, patterns are generally called \textbf{words}.
\end{definition}

We only consider patterns to be defined up to translation, i.e., if $u \in A^S$ for a finite $S \subset \mathbb{Z}^d$ and $v \in A^T$, where $T = S+p$ for some $p \in \mathbb{Z}^d$, then we write $u = v$ to mean that $u(s) = v(s+p)$ for each $s$ in $S$.

For any patterns $v \in A^S$ and $w \in A^T$ with $S \cap T = \varnothing$, we define the concatenation $vw$ to be the pattern in $A^{S \cup T}$ defined by $(vw)(S) = v$ and $(vw)(T) = w$.

\begin{definition}
For any finite alphabet $A$, the \textbf{$\mathbb{Z}^d$-shift action} on $A^{\mathbb{Z}^d}$, denoted by $\{\sigma_t\}_{t \in \mathbb{Z}^d}$, is defined by $(\sigma_t x)(s) = x(s+t)$ for $s,t \in \mathbb{Z}^d$. 
\end{definition}

We always think of $A^{\mathbb{Z}^d}$ as being endowed with the product discrete topology, with respect to which it is obviously compact. 

\begin{definition}
A \textbf{$\mathbb{Z}^d$ subshift} is a closed subset of $A^{\mathbb{Z}^d}$ which is invariant under the $\mathbb{Z}^d$-shift action. A $\zz^d$ subshift is said to be \textbf{non-trivial} if it contains at least two points.
\end{definition}

\begin{definition} 
The \textbf{language} of a $\mathbb{Z}^d$ subshift $X$, denoted by $L(X)$, is the set of all patterns with finite shape which appear in points of $X$. For any finite $S \subset \mathbb{Z}^d$, let $L_S(X) := L(X) \cap A^S$, the set of patterns in the language of $X$ with shape $S$.
\end{definition}

Any subshift inherits a topology from $A^{\mathbb{Z}^d}$, with respect to which it is compact. Each $\sigma_t$ is a homeomorphism on any $\mathbb{Z}^d$ subshift, and so any $\mathbb{Z}^d$ subshift, when paired with the $\mathbb{Z}^d$-shift action, is a topological dynamical system. For a subshift $X$, we consider the set $\mathcal{M}(X)$ of all Borel probability measures on $X$ that are invariant under all shifts $\sigma_t$. Note that $\mathcal{M}(X)$ is compact in the weak$^*$ topology. For a measure $\mu$ in $\mathcal{M}(X)$ and a pattern $w$ in $L(X)$, we let $\mu(w) = \mu([w])$, where $[w]$ denotes the cylinder set defined by $w$. 

\begin{definition}
A $\mathbb{Z}^d$ subshift $X$ is called \textbf{uniquely ergodic} if $|\mathcal{M}(X)| = 1$, i.e., if there is only one invariant Borel probability measure on $X$.
\end{definition}

Any $\mathbb{Z}^d$ subshift can also be defined in terms of disallowed patterns: for any set $\mathcal{F}$ of patterns over $A$, one can define the set %\kmmargin{Based on the definition of pattern given above, $X(\mathcal{F})$ is not shift-invariant without the $\sigma_t$.}
$$X(\mathcal{F}) := \{x \in A^{\mathbb{Z}^d} \ : \ (x)(S) \notin \mathcal{F} \ \text{ for all finite } S \subset \mathbb{Z}^d\}.$$It is well known that any $X(\mathcal{F})$ is a $\mathbb{Z}^d$ subshift, and all $\mathbb{Z}^d$ subshifts are representable in this way. 

\begin{definition}
A \textbf{$\mathbb{Z}^d$ shift of finite type (SFT)} is a $\mathbb{Z}^d$ subshift equal to $X(\mathcal{F})$ for some finite set $\mathcal{F}$ of forbidden patterns. %If $\mathcal{F}$ consists only of patterns consisting of pairs of adjacent letters, then $X(\mathcal{F})$ is called \textbf{nearest-neighbor}. 
\end{definition}

\begin{definition}
A (topological) \textbf{factor map} is any continuous shift-commuting map $\phi$ from a $\mathbb{Z}^d$ subshift $X$ onto a $\mathbb{Z}^d$ subshift $Y$. A bijective factor map is called a \textbf{topological conjugacy}.
\end{definition}

It is well-known that any factor map $\phi$ is a so-called sliding block code, i.e. there exists $n$ (called the \textbf{radius} of $\phi$) so that $x(v + [-n,n]^2)$ uniquely determines $(\phi(x))(v)$ for any $x \in X$ and $v \in \mathbb{Z}^2$. (See \cite{LM} for a proof for $d = 1$, which extends to $d > 1$ without changes.) 
A factor map $\phi$ is \textbf{$1$-block} if it has radius $0$.

\begin{definition}\label{entdef}
The \textbf{topological entropy} of a $\zz^d$ subshift $X$ is
\[
h(X) := \lim_{n_1, \ldots, n_d \rightarrow \infty} \frac{1}{\prod_{i=1}^d n_i} \log |L_{\prod_{i=1}^d [1,n_i]}(X)|.
\]
\end{definition}

\begin{definition} \label{GeneralEntropyDef}
Let $\Omega$ be a finite set, and let $\mu$ be a probability measure on $\Omega$. Then the \textbf{entropy of $\mu$} is defined as
\begin{equation*}
H(\mu) = \sum_{\omega \in \Omega} -  \mu(\{\omega\}) \log \mu ( \{ \omega \}).
\end{equation*}
\end{definition}

We will make use of the following basic facts about entropy, which we present without proof. See~\cite{CoverThomas} for details.

\begin{proposition} \label{EntropyCardUB}
Suppose $\Omega$ is a finite set and $\mu$ is a probability measure on $\Omega$. Then
\begin{equation*}
H(\mu) \leq \log |\Omega|,
\end{equation*}
with equality if and only if $\mu$ is uniformly distributed on $\Omega$.
\end{proposition}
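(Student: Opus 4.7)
The plan is to deduce the inequality from the strict concavity of $\log$, via Jensen's inequality, and then read off the equality case from strict concavity. Let $\Omega' = \{\omega \in \Omega : \mu(\{\omega\}) > 0\}$. The terms with $\mu(\{\omega\}) = 0$ contribute nothing to $H(\mu)$ (using the convention $0 \log 0 = 0$), so I may rewrite
\[
H(\mu) = \sum_{\omega \in \Omega'} \mu(\{\omega\}) \log \frac{1}{\mu(\{\omega\})}.
\]

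Next, I would apply Jensen's inequality to the concave function $\log$ with the probability weights $\mu(\{\omega\})$ and values $1/\mu(\{\omega\})$ for $\omega \in \Omega'$. This gives
\[
H(\mu) = \sum_{\omega \in \Omega'} \mu(\{\omega\}) \log \frac{1}{\mu(\{\omega\})} \leq \log \left( \sum_{\omega \in \Omega'} \mu(\{\omega\}) \cdot \frac{1}{\mu(\{\omega\})} \right) = \log |\Omega'| \leq \log |\Omega|.
\]
This proves the inequality.

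For the equality case, I would argue in both directions. If $\mu$ is uniform on $\Omega$, then each $\mu(\{\omega\}) = 1/|\Omega|$, and a direct computation gives $H(\mu) = \log |\Omega|$. Conversely, if $H(\mu) = \log |\Omega|$, then both inequalities above must be equalities. The second forces $\Omega' = \Omega$, i.e. $\mu$ has full support. The first is an equality case of Jensen's inequality for the strictly concave function $\log$, which forces the values $1/\mu(\{\omega\})$ to be constant across $\omega \in \Omega'$; hence $\mu$ is uniform on $\Omega$.

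There is no real obstacle here; the only point one needs to be a bit careful about is the handling of zero-probability atoms and the distinction between $|\Omega'|$ and $|\Omega|$, which is precisely what rules out non-fully-supported measures in the equality case. An alternative (essentially equivalent) route would be to invoke Gibbs' inequality, namely the non-negativity of the relative entropy $D(\mu \,\|\, \nu) = \sum_{\omega} \mu(\{\omega\}) \log(\mu(\{\omega\})/\nu(\{\omega\})) \geq 0$, applied with $\nu$ the uniform measure on $\Omega$; this yields $\log |\Omega| - H(\mu) \geq 0$ with equality iff $\mu = \nu$, by the strict convexity of $x \log x$.
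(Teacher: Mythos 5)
Your proof is correct and complete, including the careful treatment of zero-probability atoms and both directions of the equality case. The paper itself states this proposition without proof, simply citing Cover--Thomas, and your Jensen's inequality argument (equivalently, the Gibbs/relative-entropy route you mention) is precisely the standard textbook proof, so there is nothing to reconcile.
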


\begin{proposition} \label{EntropyProdUB}
Suppose $A$ and $\mathcal{I}$ are finite sets and $\mu$ is a probability measure on $A^{\mathcal{I}}$. For $C \subset \mathcal{I}$, let $\mu_C$ denote the projection (marginal) of $\mu$ onto $A^C$. Then for any partition $\mathcal{P}$ of $\mathcal{I}$, it holds that
\begin{equation*}
H(\mu) \leq \sum_{C \in \mathcal{P}} H(\mu_C).
\end{equation*}
\end{proposition}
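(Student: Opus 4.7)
The plan is to reduce to the two-block case by induction on $|\mathcal{P}|$, and then to establish that case by recognizing $H(\mu_{C_1}) + H(\mu_{C_2}) - H(\mu)$ as the Kullback--Leibler divergence of $\mu$ from the product of its two marginals, which is non-negative by Gibbs' inequality.

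For the induction, the case $|\mathcal{P}| = 1$ is trivial since then $\mathcal{P} = \{\mathcal{I}\}$ and $\mu_{\mathcal{I}} = \mu$. For $|\mathcal{P}| \geq 2$, pick one block $C \in \mathcal{P}$ and let $\mathcal{P}' := \mathcal{P} \setminus \{C\}$, a partition of $\mathcal{I} \setminus C$ of strictly smaller size. The two-block inequality applied to $\{C, \mathcal{I} \setminus C\}$ yields $H(\mu) \leq H(\mu_C) + H(\mu_{\mathcal{I} \setminus C})$, and the inductive hypothesis applied to $\mu_{\mathcal{I} \setminus C}$ with partition $\mathcal{P}'$ gives $H(\mu_{\mathcal{I} \setminus C}) \leq \sum_{C' \in \mathcal{P}'} H(\mu_{C'})$; combining the two inequalities produces the desired bound.

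It remains to prove the two-block case. Write $\mathcal{I} = C_1 \dcup C_2$, identify $A^{\mathcal{I}}$ with $A^{C_1} \times A^{C_2}$, and set $p(a,b) := \mu(\{(a,b)\})$ and $q(a,b) := \mu_{C_1}(\{a\})\,\mu_{C_2}(\{b\})$. Using $\mu_{C_1}(\{a\}) = \sum_b p(a,b)$ and the analogous identity for $\mu_{C_2}$, direct substitution into Definition~\ref{GeneralEntropyDef} gives
\[
H(\mu_{C_1}) + H(\mu_{C_2}) - H(\mu) \;=\; \sum_{(a,b):\, p(a,b) > 0} p(a,b) \log \frac{p(a,b)}{q(a,b)}.
\]
Whenever $p(a,b) > 0$, both marginals are positive at $a$ and $b$, so $q(a,b) > 0$ and the ratio is well defined. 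Applying the elementary bound $\log t \leq t - 1$ to $t = q(a,b)/p(a,b)$ and summing yields
\[
\sum_{(a,b):\, p(a,b) > 0} p(a,b) \log \frac{q(a,b)}{p(a,b)} \;\leq\; \sum_{(a,b):\, p(a,b) > 0} \big(q(a,b) - p(a,b)\big) \;\leq\; 1 - 1 \;=\; 0,
\]
which rearranges to $H(\mu) \leq H(\mu_{C_1}) + H(\mu_{C_2})$.

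No step here presents a genuine obstacle; the main care needed is handling terms with $p(a,b) = 0$ (which vanish under the convention $0 \log 0 = 0$) and observing that in the final inequality one still has $\sum q \leq 1$ when the sum is restricted to the support of $p$, since $q$ is itself a probability distribution on the (possibly larger) set $A^{C_1} \times A^{C_2}$.
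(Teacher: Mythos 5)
The paper presents this proposition without proof, citing Cover and Thomas, so there is no in-paper argument to compare against; your proof is precisely the standard one such a reference would give. It is correct and complete: the induction reduces soundly to the two-block case (implicitly using that marginals are consistent, i.e., $(\mu_{\mathcal{I}\setminus C})_{C'} = \mu_{C'}$ for $C'\in\mathcal{P}'$), and the two-block case is the usual Gibbs'-inequality computation, with the $p(a,b)=0$ terms and the bound $\sum q\leq 1$ over the support of $p$ both handled correctly.
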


\begin{definition} \label{MTentdef}
For a subshift $X$ and a measure $\mu$ in $\mathcal{M}(X)$, define $H_{\mu,N}$ to be the entropy of $\mu$ with respect to the partition given by $L_{[1,N]^d}(X)$: 
\begin{align*}
 H_{\mu,N}  = \sum_{w \in L_{[1,N]^d}(X)} - \mu(w) \log \mu(w).
\end{align*}
Then the \textbf{entropy} of the measure $\mu$ is given by
\begin{align*}
 h(\mu)  = \lim_{N \to \infty} \frac{1}{N^d} H_{\mu,N} = \inf_{N \to \infty} \frac{1}{N^d} H_{\mu,N}.
\end{align*}
\end{definition}

For any subshift $X$, the Variational Principle states that the topological entropy of $X$ is the supremum of the measure-theoretic entropies $h(\mu)$ over all $\mu \in \mathcal{M}(X)$, which motivates the following definition.

\begin{definition}
For any subshift $X$, any measure $\mu \in \mathcal{M}(X)$ for which $h(\mu) = h(X)$ is called a \textbf{measure of maximal entropy} for $X$.
\end{definition}

For general topological systems the supremum $h(X)$ may not be achieved; nonetheless, every subshift has at least one measure of maximal entropy; see \cite{misiurewicz} for a proof. It is natural to wonder when a subshift has a single such measure, which motivates the following definition (\cite{parry}, \cite{weiss}).

\begin{definition}  
A subshift $X$ is said to be \textbf{intrinsically ergodic} if it has exactly one measure of maximal entropy.
\end{definition}

We now turn to the mixing condition that appears in Theorem \ref{badfactor}.

\begin{definition}\label{dstardef}
A $\mathbb{Z}^d$ subshift $X$ has the \textbf{D*-condition} if for any $n$ there exists $k_n$ with the property that for any $x,y \in X$, there exists $z \in X$ such that $z([-n,n]^d) = x([-n,n]^d)$ and $z(\mathbb{Z}^d \setminus [-(k_n + n), k_n + n]^d) = y(\mathbb{Z}^d \setminus [-(k_n + n), k_n + n]^d)$. 
\end{definition}

The D*-condition was defined in \cite{ruelle} as a property of subshifts which guarantees that any finite-range Gibbs measure on $X$ must be fully supported. 
It is significantly weaker than so-called uniform mixing conditions such as the uniform filling property and strong irreducibility/specification (see \cite{BPS}). The following fact follows almost immediately from Definition~\ref{dstardef}, but it will be expeditious to state it as a lemma.

\begin{lemma}\label{dstarplacement}
If $X$ has the D*-condition, $n \in \mathbb{N}$, and $k_n$ is defined as in Definition~\ref{dstardef}, then for any $x \in X$, any (possibly infinite) collection $\{v_j\}_{j \in J} \subset \mathbb{Z}^d$ such that the sets $v_j + [-(n+k_n),n+k_n]^d$ are disjoint for $j \in J$, and any $\{w_j\}_{j \in J} \subseteq L_{[-n,n]^d}(X)$, there exists $z \in X$ so that for any $j \in J$, it holds that $z(v_j + [-n,n]^d) = w_j$, and $z(t) = x(t)$ for any $\displaystyle t \notin \bigcup_{j \in J} v_j + [-(n+k_n), n+k_n]^2$. 
\end{lemma}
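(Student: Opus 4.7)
The plan is to iterate the D*-condition once per index $j \in J$, using the disjointness of the buffer regions to ensure that each new application preserves the placements made at previous stages, and then to pass to a limit by compactness of $A^{\zd}$ in order to handle the case of infinite $J$.

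First, for each $j \in J$, since $w_j \in L_{[-n,n]^d}(X)$, I would choose some $y_j \in X$ with $y_j(v_j + [-n,n]^d) = w_j$ (obtained by translating any point of $X$ whose $[-n,n]^d$-pattern equals $w_j$). I would then enumerate $J$ (or a finite subset $F = \{j_1,\dots,j_m\} \subseteq J$) and define a sequence $z^{(0)}, z^{(1)}, \ldots \in X$ inductively by $z^{(0)} = x$ and, at stage $i \geq 1$, applying Definition~\ref{dstardef} (shifted by $v_{j_i}$) to the pair $y_{j_i}, z^{(i-1)}$ to produce $z^{(i)} \in X$ satisfying
\[
z^{(i)}(v_{j_i} + [-n,n]^d) = y_{j_i}(v_{j_i} + [-n,n]^d) = w_{j_i},
\]
and $z^{(i)}(t) = z^{(i-1)}(t)$ for all $t \notin v_{j_i} + [-(n+k_n),n+k_n]^d$. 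Since the buffers $v_j + [-(n+k_n),n+k_n]^d$ are pairwise disjoint, each newly modified region lies entirely outside all previously placed patterns at $v_{j_\ell} + [-n,n]^d$ for $\ell < i$, so $z^{(i)}(v_{j_\ell}+[-n,n]^d) = w_{j_\ell}$ continues to hold for all $\ell \leq i$; similarly $z^{(i)}$ still agrees with $x$ outside the union of buffers used so far. When $J$ is finite, the terminal $z^{(m)}$ is the desired point.

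When $J$ is infinite, I would enumerate $J = \{j_1, j_2, \ldots\}$, run the inductive construction above to obtain a sequence $(z^{(m)})_{m \in \nz} \subseteq X$, and extract a subsequential limit $z \in A^{\zd}$ using compactness; closedness of $X$ gives $z \in X$. To check the conclusion, fix $i$: by construction, for every $m \geq i$ the pattern $z^{(m)}(v_{j_i} + [-n,n]^d)$ equals $w_{j_i}$ (each later stage only changes coordinates inside a buffer disjoint from $v_{j_i} + [-n,n]^d$), so the limit satisfies $z(v_{j_i} + [-n,n]^d) = w_{j_i}$. Likewise, for any $t$ lying outside \emph{every} buffer $v_j + [-(n+k_n),n+k_n]^d$, we have $z^{(m)}(t) = x(t)$ for all $m$, hence $z(t) = x(t)$.

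The only delicate point is making sure the limit argument is legitimate: the individual $z^{(m)}$ only satisfy the desired identities for the first $m$ indices, but because each subsequent stage modifies $z^{(m)}$ only inside the next disjoint buffer, the coordinates at $v_{j_i} + [-n,n]^d$ and at points outside all buffers stabilize from stage $i$ (respectively from stage $0$) onward. Thus the passage to the limit is pointwise-eventual, not merely topological, and both required properties transfer to $z$. There is no substantive obstacle here; this is essentially a bookkeeping verification, which is why the authors describe it as almost immediate from Definition~\ref{dstardef}.
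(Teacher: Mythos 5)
Your proof is correct and follows essentially the same route as the paper: iterate the D*-condition once per index (the paper phrases this as induction on $|J|$, you phrase it as a sequential construction, but these are the same argument), then extract a compactness limit for infinite $J$. Your additional remark that the relevant coordinates stabilize from some finite stage onward is a slightly more careful justification of the limit step than the paper gives, but it is not a different method.
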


\begin{proof}

For finite $J$, we prove the lemma by induction on $|J|$. The case $|J| = 1$ is just the definition of the D*-condition. Now, suppose the lemma holds for $|J| = j$. Consider any $|J| = j+1$ and $\{v_j\}$ and $\{w_j\}$ as in the lemma, and choose any $j_0 \in J$. Then, one can first apply the inductive hypothesis for $J \setminus \{j_0\}$ to get $x' \in X$ which satisfies the conclusion of the lemma for all $j \in J \setminus \{j_0\}$. But then, applying the $|J| = 1$ case to $x'$ and $w_{j_0}$ yields $x$ satisfying the conclusion of the lemma for $J$ itself, completing the proof. 

Now, for infinite (but by necessity countable) $J$, we first assume $J$ to be $\mathbb{N}$ without loss of generality. Then, for each $m$, by appeal to the finite case, there exists $x_m$ which has the desired properties for $J_m = [1,m]$. The sequence $x_m$ has a convergent subsequence by compactness, and its limit has the desired properties for all $j$, completing the proof.

\end{proof}

%Lemma~\ref{dstarplacement} implies as a corollary that any nontrivial subshift $X$ \sout{(i.e. $|X| > 1$)} with the D*-condition must have positive entropy. To see this, fix any $a \neq b$ in the alphabet of $X$ and, for any $N$, apply the lemma to the set $\{v_j\}_{j \in J} = (\{3k_1, 6k_1, \ldots, 3Nk_1\})^d$ and any choice of $\{w_j\} \in \{a,b\}^{N^d}$. This yields $2^{N^d}$ configurations in $L_{[1,3Nk_1]^d}(X)$, all of which must be distinct. Then, taking logs, dividing by $(3Nk_1)^d$, and taking $N \rightarrow \infty$ shows that $h(X) \geq \frac{\log 2}{(3k_1)^d} > 0$.

In the proof of Theorem \ref{badfactor}, we will use the following technical lemma.

\begin{lemma}\label{dstarremoval}
If $X$ is a nontrivial $\mathbb{Z}^d$ subshift with the D*-condition, then there exists a pattern $w \in L(X)$ so that if we define $X'$ to be the subshift consisting of points in $X$ with no occurrences of $w$, then $h(X') > 0$.
\end{lemma}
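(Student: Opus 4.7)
The plan is to exhibit a proper subshift $Y \subsetneq X$ with positive entropy; because two distinct subshifts must have distinct languages, I can then pick any $w \in L(X) \setminus L(Y)$, which by construction will not appear in any point of $Y$. This places $Y$ inside $X'$, giving $h(X') \geq h(Y) > 0$.

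First I would show $h(X) > 0$ and that $|L_{[-n,n]^d}(X)| \to \infty$: since $X$ is nontrivial, two of its points differ at some site, producing distinct $u_0, u_1 \in L_{[-n,n]^d}(X)$ for every sufficiently large $n$; Lemma~\ref{dstarplacement} applied at the lattice $M\zd$ with $M = 2(n+k_n)+1$ and each $f \colon M\zd \to \{0,1\}$ produces $z_f \in X$ with $z_f(M\lambda + [-n,n]^d) = u_{f(\lambda)}$, and distinct $f$'s yield distinct patterns on $[1,NM]^d$, giving $h(X) \geq (\log 2)/M^d > 0$. Picking $n$ large enough that $|L_{[-n,n]^d}(X)| \geq 3$, I fix three distinct patterns $u_0, u_1, u_2 \in L_{[-n,n]^d}(X)$ and set $k = k_n$, $m = n+k$, $M = 2m+1$.

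Next, for each residue $\tau \in \{0, \ldots, M-1\}^d$ I define
\[
Y_\tau = \bigl\{\, y \in X : y(M\lambda + \tau + [-n,n]^d) \in \{u_0, u_1\} \text{ for every } \lambda \in \zd \,\bigr\}
\]
and put $Y = \bigcup_\tau Y_\tau$. Each $Y_\tau$ is closed, and a direct calculation gives $\sigma_s(Y_\tau) = Y_{(\tau - s) \bmod M}$, so the shifts permute the $Y_\tau$ and $Y$ is a subshift. The $z_f$'s constructed above already lie in $Y_0 \subseteq Y$, so the same counting yields $h(Y) \geq (\log 2)/M^d > 0$.

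The main obstacle, as I see it, is producing a point $y^* \in X \setminus Y$---one that escapes \emph{every} $Y_\tau$ simultaneously. My plan is to use Lemma~\ref{dstarplacement} once more, placing a copy of the ``extra'' pattern $u_2$ once in each residue class: for each $\tau \in \{0, \ldots, M-1\}^d$ I choose $\lambda_\tau \in \zd$ so that the positions $v_\tau := M\lambda_\tau + \tau$ have pairwise disjoint buffers $v_\tau + [-m,m]^d$ (achievable because there are only $M^d$ of them, so the $\lambda_\tau$ can be spread along a coordinate axis). The lemma then produces $y^* \in X$ with $y^*(v_\tau + [-n,n]^d) = u_2$ for every $\tau$; since $u_2 \notin \{u_0, u_1\}$, this witnesses $y^* \notin Y_\tau$ for each $\tau$. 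Hence $Y \subsetneq X$, so $L(Y) \subsetneq L(X)$, and any $w \in L(X) \setminus L(Y)$ satisfies the conclusion.
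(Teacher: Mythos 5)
Your proof is correct, but it reaches the conclusion by a genuinely different route in the key step. The paper constructs the forbidden pattern $w$ explicitly: it builds a large pattern containing copies of a third pattern $t$ centered in \emph{every} residue class modulo the grid spacing, so that no translate of $w$ can occur in a point whose grid positions are all filled with $u$ or $v$; positive entropy then comes from the independent $u/v$ choices, exactly as in your first paragraph. You instead define a closed, shift-invariant, positive-entropy set $Y = \bigcup_\tau Y_\tau$ directly, prove $Y \subsetneq X$ by planting the third pattern $u_2$ once in each residue class (via Lemma~\ref{dstarplacement}, which applies since the buffers can be made disjoint), and then invoke the soft fact that a proper subshift of $X$ must omit some pattern of $L(X)$ to extract $w$ non-constructively. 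The two arguments are mirror images: the paper packs all residue classes into $w$ so that $w$ cannot dodge the grid, while you pack all residue classes into a single witness point $y^*$ so that $Y$ cannot equal $X$. Your version is slightly softer and arguably cleaner in that it avoids the case analysis about where a hypothetical occurrence of $w$ sits, at the cost of yielding no explicit $w$ (which is harmless here, since the application in Theorem~\ref{WRfactor} only needs existence, and any $w$ can be enlarged to a cube-shaped pattern without destroying the conclusion). All the supporting steps you use --- that $Y$ is a subshift because the shifts permute the $Y_\tau$, that the $z_f$ give $h(Y) \geq (\log 2)/M^d$, and that $|L_{[-n,n]^d}(X)| \geq 3$ for some $n$ (which you get from $h(X)>0$, whereas the paper rules out the two-point periodic orbit directly) --- check out.
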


\begin{proof}

Assume that $X$ is such a subshift. Since $X$ is nontrivial, the alphabet of $X$ contains at least $2$ letters. If $|L_S(X)| = 2$ for every finite shape $S$, then $X$ is a periodic orbit of two points, which does not have the D*-condition, a contradiction. Therefore, there exists $S$ so that $|L_S(X)| \geq 3$, and since enlarging $S$ cannot decrease $|L_S(X)|$, we assume without loss of generality that $S = [-n,n]^d$ for some $n$. Denote by $k = k_n$ the $k_n$ guaranteed by Definition~\ref{dstardef}, and choose any distinct patterns $t,u,v \in L_{[-n,n]^d}(X)$.

Begin with an arbitrary point $x \in X$, and use Lemma~\ref{dstarplacement} with the set $\{v_j\}_{j \in J} = \{0, 2k+2n+1, 2(2k+2n+1), \ldots, (2k+2n)(2k+2n+1)\}^d$ and $w_j = t$ for every $j$. In other words, we create $x' \in X$ with a finite equispaced grid of occurrences of $t$, whose centers have separation $2k + 2n + 1$ along each cardinal direction. Define $w = x([-n,(2k+2n)(2k+2n+1)+n]^d)$, a pattern in $L(X)$ which also contains the entire grid of occurrences of $t$ just described. 

We claim that if $X'$ is defined as in the lemma, then $h(X') > 0$.
To see this, we define a family of points in $X'$ in the following way: start with $x \in X$, and use Lemma~\ref{dstarplacement} with the set $\{v_j\}_{j \in J} = ((2k+2n)\mathbb{Z})^d$ and any choice of $w_j \in \{u,v\}$ for every $j$. In other words, we create points with an infinite equispaced grid filled with independent choices of $u$ or $v$, whose centers have separation $2k + 2n$ along each cardinal direction. We claim that all such points are in $X'$. Suppose for a contradiction that such a point, call it $y$, has an occurrence of $w$. However, note that no matter how $w$ is shifted, it will contain some occurrence of $t$ with center in $((2k+2n)\mathbb{Z})^d$ (because $w$ contained occurrences of $t$ in every coset in $(\mathbb{Z}^d)/((2k+2n)\mathbb{Z})^d$.) This occurrence gives a contradiction, since every translate of $[-n,n]^2$ with center in $((2k+2n)\mathbb{Z})^d$ in $y$ is filled with either $u$ or $v$, and so not $t$. For every $m$, this yields at least $2^{m^d}$ patterns in $L_{[-n, (m-1)(2k+2n) + n]^d}(X')$ (from $m^d$ independent choices of $u$ or $v$), and so
\[
h(X') \geq \lim_{m \rightarrow \infty} \frac{\log 2^{m^d}}{((m-1)(2k+2n) + 2n + 1)^d} > 0.
\]

\end{proof}

\section{Proof of Theorem~\ref{badfactor}}
\label{badproof}

The ``bad factor'' proving Theorem~\ref{badfactor} will always be of the same type; it will be a shift of finite type based on the lattice Widom-Rowlinson model from statistical physics (\cite{WR}). We first define this SFT.

\begin{definition}\label{WRdef}
For any $R_1 \leq R_2$, the $\mathbb{Z}^d$ \textbf{Widom-Rowlinson SFT with interaction distances $R_1$ and $R_2$}, denoted by $W_{R_1, R_2}$, is the SFT with alphabet $\{0,+,-\}$ which consists of all $x \in \{0,+,-\}^{\mathbb{Z}^d}$ satisfying the following local rules: (here and elsewhere, the distance between sites of $\mathbb{Z}^d$ always refers to the $\ell_{\infty}$ metric)\\

\noindent
$\bullet$ any pair of nonzero symbols must have distance greater than $R_1$

\noindent
$\bullet$ any pair of nonzero symbols with opposite signs must have distance greater than $R_2$

\end{definition}

It seems ``well-known'' that if $R_2$ is large compared to $R_1$, then $W_{R_1, R_2}$ is not intrinsically ergodic (see \cite{LG}). However, technically the cited paper only treats the case where $R_1 = 1$, and so we present a self-contained proof here. We will use a fairly standard Peierls argument, following \cite{LG}. 

\begin{theorem}\label{WRnotIE}
For any $d > 1$ and $R_1$, there exists $N = N(R_1,d)$ so that if $R_2 \geq N$, then $W_{R_1, R_2}$ is not intrinsically ergodic.
\end{theorem}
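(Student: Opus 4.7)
The plan is a symmetry-breaking Peierls argument producing two distinct measures of maximal entropy on $W_{R_1, R_2}$ when $R_2$ is sufficiently large relative to $R_1$ and $d$. Writing $\Lambda_n = [-n,n]^d$, fix a reference ``$+$-phase'' pattern $\omega^+$ consisting of $+$'s on a maximally-dense $R_1$-separated sublattice of $\mathbb{Z}^d$ (e.g. $((R_1+1)\mathbb{Z})^d$) and $0$'s elsewhere, and let $\omega^-$ be its $+ \leftrightarrow -$ image. Let $Z_n^{\pm}$ be the set of patterns in $L_{\Lambda_n}(W_{R_1, R_2})$ whose restriction to the width-$R_2$ outer layer $\Lambda_n \setminus \Lambda_{n - R_2}$ agrees with $\omega^{\pm}$, let $\bar\nu_n^{\pm}$ denote the $\Lambda_n$-shift-average of the uniform probability measure on $Z_n^{\pm}$ (pushed to $\mathcal{M}(W_{R_1, R_2})$ by extending each pattern periodically via $\omega^{\pm}$ outside $\Lambda_n$), and let $\mu^{\pm}$ be a weak-$*$ subsequential limit. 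Both $\mu^{\pm}$ are shift-invariant and are measures of maximal entropy: any pattern in $L_{\Lambda_{n - 2R_2}}(W_{R_1, R_2})$ extends to an element of $Z_n^{\pm}$ by interposing a width-$R_2$ buffer of $0$'s, giving $|Z_n^{\pm}| \geq |L_{\Lambda_{n - 2R_2}}(W_{R_1, R_2})|$, whence $\tfrac{1}{|\Lambda_n|}\log |Z_n^{\pm}| \to h(W_{R_1, R_2})$; upper semi-continuity of measure-theoretic entropy together with the variational principle identifies $\mu^{\pm}$ as measures of maximal entropy.

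The main step is a Peierls bound showing $\mu^+(x(0) = -) \to 0$ as $R_2 \to \infty$. For each $p \in Z_n^+$ with $p(0) = -$, define an outer contour $\gamma(p)$ as the site-boundary (in $\mathbb{Z}^d$) of the connected $+$-free region of $p$ containing the origin; this contour is finite because the outer layer of $Z_n^+$ contains $+$'s. Since opposite nonzero signs lie at $\ell^\infty$-distance $> R_2$, this $+$-free region contains an all-$0$ moat of volume at least $R_2|\gamma|/C_d$ for a dimensional constant $C_d$. A standard Peierls comparison---flipping $+ \leftrightarrow -$ inside $\gamma$ (valid because the moat decouples the inside from any outside $-$'s, so both the $R_1$- and $R_2$-separation constraints survive the flip, and the alphabet symmetry preserves the count of valid interior configurations) combined with the combinatorial cost of forcing all moat sites to be $0$---yields
\begin{equation*}
\frac{|\{p \in Z_n^+ : \gamma(p) = \gamma\}|}{|Z_n^+|} \leq \alpha^{R_2 |\gamma|}
\end{equation*}
for some $\alpha < 1$. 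Standard lattice-animal enumeration gives at most $B_d^{|\gamma|}$ admissible contours of given cardinality enclosing the origin, so a union bound gives
\begin{equation*}
\mu_n^+(x(0) = -) \leq \sum_{\ell \geq 1} B_d^\ell \alpha^{R_2 \ell} =: \epsilon(R_2),
\end{equation*}
a convergent geometric series with $\epsilon(R_2) \to 0$ as $R_2 \to \infty$; the bound passes to $\mu^+$ on taking $n \to \infty$.

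To conclude, the $+ \leftrightarrow -$ involution of $W_{R_1, R_2}$ (swapping $\omega^\pm$, hence $Z_n^\pm$) swaps $\mu^+$ and $\mu^-$, so $\mu^+ = \mu^-$ would force both measures to be fixed by the involution, giving $\mu^+(x(0) = +) = \mu^+(x(0) = -) \leq \epsilon(R_2)$ and hence $\mu^+(x(0) = 0) \geq 1 - 2\epsilon(R_2)$. By Proposition~\ref{EntropyProdUB} applied to singleton partitions (together with the standard subadditive formula for measure-theoretic entropy), $h(\mu^+)$ is bounded above by the single-site marginal entropy $H(\epsilon(R_2), 1 - 2\epsilon(R_2), \epsilon(R_2))$, which tends to $0$ as $R_2 \to \infty$. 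But $h(\mu^+) = h(W_{R_1, R_2}) \geq h^+(R_1, d) > 0$, where $h^+(R_1, d)$ is the entropy of the $\{0,+\}$-valued hard-core subshift with only the $R_1$-separation constraint (a quantity independent of $R_2$). Choosing $N = N(R_1, d)$ so that the single-site entropy bound falls below $h^+(R_1, d)$ for $R_2 \geq N$ forces the contradiction $\mu^+ \neq \mu^-$. The main obstacle is making the Peierls switching estimate precise in arbitrary $d \geq 2$ and for general $R_1$: the contour definition must simultaneously support the moat-volume lower bound $\geq R_2|\gamma|/C_d$, a valid switching injection preserving both separation constraints (delicate when nearby $-$-islands sit outside $\gamma$), and the lattice enumeration bound $B_d^{|\gamma|}$; this is classical for $R_1 = 1$ as in \cite{LG}, but requires careful bookkeeping for $R_1 > 1$.
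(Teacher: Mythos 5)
Your overall strategy is the same as the paper's: finite-volume measures with $+$ (resp.\ $-$) boundary conditions whose weak-$*$ limits are measures of maximal entropy, a Peierls contour bound showing $\mu^+$ gives vanishing probability to a $-$ at the origin as $R_2 \to \infty$, lattice-animal enumeration of contours, an $R_2$-independent entropy lower bound (the $\{0,+\}$ hard-core bound $\frac{\log 2}{(R_1+1)^d}$) forcing any measure of maximal entropy to carry a definite density of nonzero symbols, and the $+\leftrightarrow-$ symmetry to conclude. The framing of the endgame (contradiction from $\mu^+=\mu^-$ via a single-site entropy bound) and of the MME identification (counting plus upper semicontinuity rather than the paper's appeal to strong irreducibility and Burton--Steif) are cosmetic variants.

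However, the central estimate $|\{p : \gamma(p)=\gamma\}|/|Z_n^+| \leq \alpha^{R_2|\gamma|}$ is asserted, not proved, and as stated it fails for your choice of contour. If $\gamma$ is the site-boundary of the connected $+$-free region containing the origin, that region can have thin tendrils far from the origin (of width $w \ll R_2$, entirely forced to be $0$ but contributing boundary length $\sim 2L$ and moat volume only $\sim wL$), so the moat-volume lower bound $R_2|\gamma|/C_d$ is false in general. The paper sidesteps this by taking $A$ to be the connected component, containing $v$, of the union of cubes $t+[-R_2,R_2]^d$ over all $-$ sites $t$; this set has thickness at least $R_2$ in every cardinal direction by construction, which is exactly what makes the moat estimate work. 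Two further pieces of bookkeeping you wave at but do not supply: (i) the flip alone is volume-preserving, so the entire Peierls gain must come from an explicit family of at least $cR_2|\gamma|/R_1^d$ mutually $R_1$-separated insertion sites in the moat (the paper's set $B'''$, built by a greedy algorithm and pruned away from the contour), together with a verification that insertions violate neither separation constraint; and (ii) the map from configurations to their images must have controlled multiplicity --- the paper flips only the $-$'s connected to the moat and pays a factor $2^{4^d|\gamma|}$ to account for which $+$'s in the image were flipped, and this factor must be absorbed into the exponential gain, which is where the explicit hypothesis $R_2 > 2^{5d+2}R_1^{2d}$ enters. Since you yourself flag the switching estimate as the unresolved ``main obstacle,'' the proposal as written has a genuine gap at precisely the step that constitutes the bulk of the paper's proof.
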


\begin{proof} 

Fix any $R_1$. For technical reasons, assume that $R_2 > 2^{5d+2} R_1^{2d}$. For brevity, we will refer to $W_{R_1, R_2}$ simply as $W$. For any $k$ a multiple of $R_1 + 1$, consider patterns on the cube $[-k,k]^d$ with boundary condition $\delta_{k,+}$ on $[-k,k]^d \setminus [-k+1,k-1]^d$ given by $\delta_{k,+}(v) = +$ if exactly one of the $v_i$ is $\pm k$ and all others are divisible by $R_1 + 1$, and $\delta_{k,+}(v) = 0$ otherwise. In other words, $\delta_{k,+}$ contains equispaced $+$ symbols at distance $R_1 + 1$ on each face of the boundary and $0$ symbols elsewhere on the boundary. This leaves only sites on $[-k+1,k-1]^d$ undefined, and so we can define the measure $\mu_{k,+}$ on $L_{[-k+1,k-1]^d}(X)$ which gives equal measure to every $x$ so that $x\delta_{k,+} \in L(W)$. For any $v \in [-k+1,k-1]^d$, we define $E_{-,v}$ to be the event that (i.e., set of patterns such that) there is a $-$ symbol at $v$. We will give an upper bound on $\mu_{k,+}(E_{-,v})$.

To this end, fix $v$ and consider any $x \in E_{-,v}$. Then, consider the union $U$ of $t + [-R_2,R_2]^d$ over all $t$ at which $(x\delta_{k,+})(t) = -$. This union is nonempty since $x \in E_{-,v}$ implies that $v \in U$. It is contained within $[-k+1,k-1]^d$ by the boundary condition $\delta_{k,+}$ and the fact that $-$ and $+$ symbols must be separated by distance greater than $R_2$. It may consist of several disjoint connected components; define $A$ to be the one containing $v$. Let $C$ be the ``outermost contour'' of $A$, i.e. the set of sites in $A^c$ that are adjacent to a site in $A$ but also can be connected to the boundary of $[-k,k]^d$ by a path of adjacent sites in $A^c$. 

Then define by $M$ (for ``moat'') the set of sites in $A$ within distance $R_2$ of $C$. It should be clear that every site in $M$ must be labeled by $0$ in $x$; such a site can't be a $-$ since it's within distance $R_2$ of a site not in $A$, and it can't be a $+$ since it's in $U$ and thereby within distance $R_2$ of a $-$ symbol. We note for future reference that $M$ is in fact determined by $C$, because of the following alternate definition of $M$: $M$ is the set of all sites within distance $R_2$ of $C$ that are ``inside'' $C$, i.e. which cannot be connected to a site on the boundary of $[-k,k]^d$ without passing through a site in $C$. We leave it to the reader to verify that this definition of $M$ is equivalent to the original one. See Figure~\ref{IEpic1} for an illustration. We note also, as it will be useful later, that $M$ has ``thickness'' at least $R_2$ in every cardinal direction, i.e. any line segment in a cardinal direction connecting a site inside $M$ to a site outside $M$ passes through at least $R_2$ consecutive sites of $M$ in between.

\begin{figure}[ht]
\centering
\includegraphics[scale=0.4]{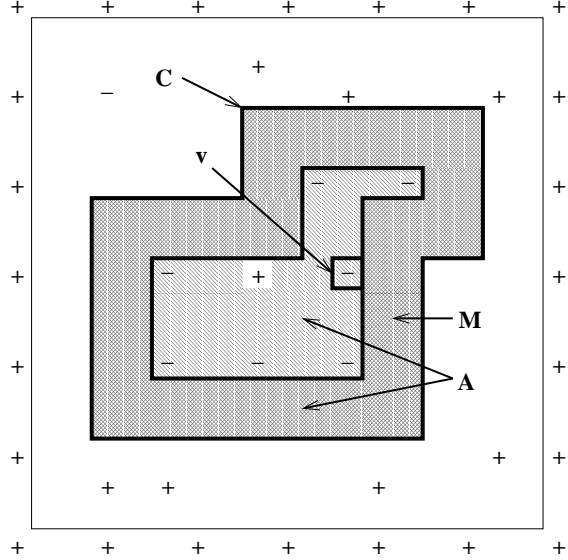}
\caption{A pattern $x$ in $E_{v,-}$ (the blank area represents $0$ symbols)}
\label{IEpic1}
\end{figure}

Define $E_{M,-,v}$ to be the set of all $x \in E_{-,v}$ which have a particular set $M$ as its ``moat.'' Clearly $E_{-,v}$ is the disjoint union of $E_{M,-,v}$ over all possible $M$. We wish to give an upper bound on each $\mu_{k,+}(E_{M,-,v})$ by a simple counting argument. Fix an $M$ and corresponding $E_{M,-,v}$, and define a function $\rho: E_{M,-,v} \rightarrow L_{[-k+1,k-1]^d}(W)$ as follows: $\rho(x)$ is obtained from $x$ by ``flipping'' (i.e. changing to $+$) every $-$ symbol at a location $a \in A$ with the following property: there exists a finite path $a = a_0, a_1, \ldots, a_m$ of sites in $A$ where $x(a_i) = -$ for every $i$, $a_m$ is within distance $R_2$ of $M$, and $a_i$ is within distance $R_2$ of $a_{i+1}$ for every $i$. The case $m = 0$ is included, i.e. $-$ symbols in $A$ which are themselves within distance $R_2$ from $M$ are flipped. (Figure~\ref{IEpic2} shows the application of $\rho$ to the pattern from Figure~\ref{IEpic1}.)

%In changing $x$ to $\rho(x)$, if there are any $+$ symbols within distance $R_2$ of a site in $M$, those are NOT flipped.

We first wish to show that $\rho(x)$ is indeed legal. Since changing $x$ to $\rho(x)$ involves only switching of $-$ symbols to $+$ symbols, the only possible problem would be if there exist $s,t$ with distance less than or equal to $R_2$ for which $x(s) = x(t) = -$ and one was flipped in the process of changing $x$ to $\rho(x)$ while the other was not. We show that such $s,t$ can not exist by considering three cases. First, it's not possible to have $s \in A$ and $t \notin A$: by definition of $M$, if $s$ and $t$ have distance less than or equal to $R_2$, then $s \in M$, which would imply $x(s) = 0$. (Clearly, the same proof shows that $s \notin A$, $t \in A$ is impossible.) Second, it's clearly not possible to have $s,t \notin A$, since then neither site would be flipped. The third case is $s,t \in A$, but the rules defining $\rho(x)$ imply that if a $-$ symbol in $A$ is within $R_2$ of another flipped $-$ symbol in $A$, then the first $-$ symbol must also be flipped, ruling this case out as well. 

\begin{figure}[ht]
\centering
\includegraphics[scale=0.4]{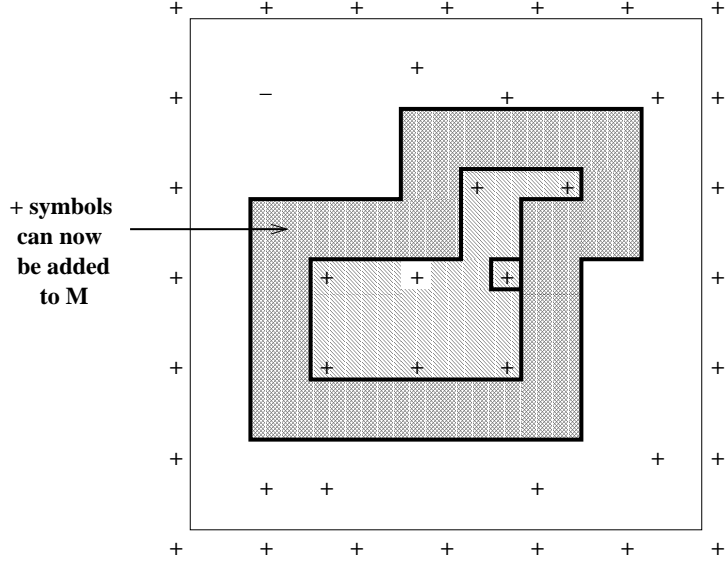}
\caption{$\rho(x)$ for the $x$ from Figure~\ref{IEpic1}}
\label{IEpic2}
\end{figure}

The map $\rho$ is not necessarily one-to-one on $E_{M,-,v}$; in looking at $\rho(x)$, if one sees a $+$ symbol, it is not immediately clear whether that $+$ was a $+$ present in $x$ or a $-$ changed to a $+$. In order to determine $x$ from $\rho(x)$, it would suffice to know whether each such $+$ was flipped or not. We first note that it's sufficient to know whether the $+$ symbols within distance $R_2$ of a site in $M$ were flipped or not. To see this, note that the only $+$ symbols in $\rho(x)$ which could have been flipped must be connected to a $+$ symbol within distance $R_2$ of $M$ by a path of $+$ symbols of distances at most $R_2$. But then, either all $+$ symbols on the path were flipped or all were not flipped, since their distances of at most $R_2$ mean that they were all forced to have the same sign in $x$. This implies that the set of all $+$ symbols in $\rho(x)$ which were flipped is precisely the set of $+$ symbols which can be connected to a flipped $+$ symbol within distance $R_2$ of $M$ by a path of $+$ symbols of distances at most $R_2$.

We now wish to give an upper bound on the number of ways in which the $+$ symbols within distance $R_2$ of $M$ could have arisen. To do this, consider $M'$, the set of sites in $A$ within $R_2$ of a site in $M$. This is a subset of $M''$, the set of sites in $A$ within $2R_2$ of a site in $C$. Then $M''$ can be written as a union of sets $A \cap (c + [-2R_2, 2R_2]^d)$ over all $c \in C$. We break each such set into $4^d$ disjoint regions $A \cap (c + \prod_{j=1}^d [i_j R_2, (i_j+1)R_2])$ for $-2 \leq i_j < 2$. Then, for each one of these regions, all $+$ symbols inside must either all have been flipped or all have been not flipped, since the diameter of the region is at most $R_2$. This means that we have an upper bound of $2^{4^d}$ on the number of ways in which each $+$ symbol in $A \cap (c + [-2R_2, 2R_2]^d)$ can have status ``flipped'' or ``not flipped,'' yielding the following upper bound:
\begin{equation} \label{Highlands}
|\rho^{-1}(y)| \leq 2^{4^d |C|}.
\end{equation}

For every $\rho(x)$, we wish to generate many legal patterns in $W$ by changing some of the $0$ symbols in $M$ to $+$ symbols. For this purpose, we note that in 
$\rho(x)$, no site in $M$ is within $R_2$ of a $-$ symbol; any such symbol would have been flipped by definition of $\rho$. Therefore, when introducing $+$ symbols into sites in $M$ in $\rho(x)$, we must only check that we do not create a pair of $+$ symbols with distance less than $R_1$.

By definition of $M$, for every site $t \in C$, there exists a direction $u \in \{\pm e_i\}$ so that $t + ku \in M$ for all $1 \leq k \leq R_2$. Choose a fixed $u$ so that there is a set $B \subseteq C$, $|B| \geq |C|/2d$, for which each site in $B$ satisfies the above condition for $u$. Define $B' = \bigcup_{b \in B, 1 \leq k \leq R_2} \{b + k u\}$; clearly this union is disjoint and $|B'| = R_2 |B| \geq R_2 |C|/2d$. Now we use a greedy algorithm to choose a subset $B'' \subseteq B'$ so that each pair of sites in $B''$ is separated by distance more than $R_1$. Formally speaking, start with $B'' = \varnothing$, and add sites to $B''$ in the following way. Choose any site $b' \in B'$, remove it from $B'$, and add it to $B''$ (making it the only element of $B''$ for the moment). Then, remove all sites in $B'$ within distance $R_1$ of $b'$. Repeat this procedure until $B'$ is empty. At each step of this procedure, we increase $B''$ by exactly one and decrease $B'$ by less than %\kmmargin{Ok. I understand using $4^d R_1^d$ here, but then I don't understand the first inequality in this line.} 
$4^d R_1^d$, and so $|B''| \geq |B'|/(4^d R_1^d) \geq R_2 |C| / (8^d R_1^d)$.

%We note that we could have instead begun with sites in $C'$ and obtained a set $B''$ with the same properties and cardinality at least $R_2 |C'| / (8^d R_1^d)$; if $|C'| > |C|$, then define $B''$ in this way instead. Therefore, $|B''| \geq R_2 \max(|C|, |C'|)/(8^d R_1^d) \geq R_2 (|C| + |C'|)/(2^{3d+1} R_1^d)$.

Finally, we wish to remove from $B''$ any sites within distance $R_1$ of $C$, which reduces the size of $B''$ by less than or equal to $R_1^d |C|$. Doing this yields a set $B'''$ with 
\begin{equation} \label{Linger}
|B'''| \geq |C| \left(\frac{R_2}{8^d R_1^d} - R_1^d\right) \geq |C| \frac{R_2}{2^{3d+1} R_1^d},
\end{equation}
since $R_2 > 2^{3d+1} R_1^{2d}$.

We finally note that in $\rho(x)$, since all sites in $B'''$ are separated by more than $R_1$ from each other and none is within $R_1$ of any site outside $M$, we may independently change the $0$s at sites in $B'''$ to $+$ in any way to yield a legal pattern in $W$. For $\rho(x) \neq \rho(x')$ given by $x,x' \in E_{M,-,v}$, the sets of patterns thus obtained will obviously be disjoint, since the only changes are made within $M$, where $\rho(x)$ and $\rho(x')$ both were labeled with all $0$ symbols. Therefore, by (\ref{Highlands}) and (\ref{Linger}), we have
\begin{multline*} 
\big|L_{[-k,k]^2}(W) \cap [\delta_{k,+}] \big| \geq 2^{|B'''|} \big|\rho(E_{M,-,v})\big| \geq 2^{|C| \frac{R_2}{2^{3d+1} R_1^d}} \big|\rho(E_{M,-,v})\big| \geq \\ 2^{|C| (\frac{R_2}{2^{3d+1} R_1^d} - 4^d)} |E_{M,-,v}| \geq 
2^{|C| \frac{R_2}{2^{3d+2} R_1^d}} |E_{M,-,v}|.
\end{multline*}

(The last inequality holds since $R_2 > 2^{5d+2} R_1^d$.) This inequality gives that $\mu_{k,+}(E_{M,-,v}) \leq 2^{-|C| \frac{R_2}{2^{3d+2} R_1^d}}$. Then, since $C$ determines $M$, we obtain that
\[
\mu_{k,+}(E_{-,v}) \leq \sum_{C} 2^{-|C| \frac{R_2}{2^{3d+2} R_1^d}} \leq \sum_{n \geq 1} C_n \Big(2^{\frac{R_2}{2^{3d+2} R_1^d}}\Big)^{-n},
\]
where $C_n$ is the number of possible contours of size $n$ surrounding the origin. It is well-known that up to translation, the number of connected subsets of sites of $\mathbb{Z}^d$ with size $n$ (the so-called lattice animals) is bounded from above by $(2^{2d-1})^n$ \cite{WS}, and then the number of translates of a contour that could surround the origin is bounded from above by $(2n)^d < (2^d)^n$, so that we have $C_n \leq (2^{3d-1})^n$. Therefore,

\[
\mu_{k,+}(E_{-,v}) \leq \sum_{n \geq 1} \Big(2^{\frac{R_2}{2^{3d+2} R_1^d} - (3d-1)}\Big)^{-n} = \frac{\alpha}{1 - \alpha},
\]
where $\alpha = 2^{-\Big(\frac{R_2}{2^{3d+2} R_1^d} - (3d - 1)\Big)}$. (We note that $\alpha < 1$ due to the original assumption $R_2 > 2^{5d+2} R_1^{2d}$.) This bound holds for every $k$ that is a sufficiently large multiple of $R_1 + 1$ and every $v \in [-k+1,k-1]^d$. Note that if $R_1$ is fixed and we allow $R_2$ to approach infinity, then $\alpha \rightarrow 0$, yielding an upper bound approaching $0$ on $\mu_{k,+}(E_{-,v})$ that does not depend on $k$ or $v$. On the other hand, we claim that $h(W) \geq \frac{\log 2}{(R_1 + 1)^d} > 0$ regardless of how large $R_2$ is; indeed, one can make legal patterns in $W$ by independently choosing sites with all coordinates divisible by $R_1 + 1$ to be $0$ or $-$ and assigning all other sites to be $0$. Now, note that for any measure $\mu$ in $\mathcal{M}(W)$, if we define $\mu(+) = \beta_1$, $\mu(-) = \beta_2$, and $\beta = \beta_1 + \beta_2$, then
\begin{multline*}
h(\mu) \leq H_{\mu,1} =  -(1 - \beta) \log(1 - \beta) - \beta_1 \log \beta_1 - \beta_2 \log \beta_2 \leq -(1 - \beta) \log (1 - \beta)\\ 
- (\beta/2) \log(\beta/2) - (\beta/2) \log(\beta/2) = -(1 - \beta) \log (1 - \beta) - \beta \log(\beta/2)
\end{multline*}
by convexity. The right-most expression in the above display is clearly continuous in $\beta$ and decreases to $0$ when $\beta$ decreases to $0$, and so we can choose $\beta' > 0$ so that  $-(1-\beta') \log (1-\beta') - \beta' \log (\beta'/2) = \frac{\log 2}{(R_1 + 1)^d}$. Then, for any measure of maximal entropy $\mu$ of $W$, we must have $\mu(+ \cup -) > \beta'$ (otherwise, by the above computations, $h(\mu) < \frac{\log 2}{(R_1 + 1)^d} \leq h(W)$, contradicting $\mu$ being a measure of maximal entropy on $W$).

Now define $N$ so that whenever $R_2 \geq N$, we have that $\mu_{k,+}(E_{-,v}) < \frac{\beta'}{3}$ for every $k$ that is a sufficiently large multiple of $R_1 + 1$ and every $v \in [-k+1,k-1]^d$. Then, take any weak$^*$ limit point $\mu_+$ of the measures $\displaystyle \frac{1}{|[-k/2,k/2]^d|} \sum_{v \in [-k/2,k/2]^d} \sigma_v \mu_{k,+}$ as $k \rightarrow \infty$; clearly $\mu_+$ is shift-invariant. We note that for any two patterns in $L(W)$ separated by distance greater than $R_2$, the remainder of $\mathbb{Z}^d$ can be filled with $0$s to make a point in $W$, which implies that $W$ is strongly irreducible as defined in \cite{BS}. Therefore, by Proposition 1.12(ii) from that same paper, $\mu_+$ is a measure of maximal entropy for $W$. However, $\mu_+(E_{-,0})$ is a limit of averages of $\mu_{k,+}(E_{-,v})$ over various $v$ and is therefore less than or equal to $\frac{\beta'}{3}$ whenever $R_2 > N$. We finally note that by the symmetry of the local rules defining $W$, there must be another measure of maximal entropy $\mu_-$ obtained by simply flipping signs of $+$ and $-$ symbols for $\mu_+$; more rigorously, for any pattern $w$, let $\mu_-(w) := \mu_+(\overline{w})$, where $\overline{w}$ is obtained from $w$ by flipping every nonzero symbol. Clearly, when $R_2 > N$, $\mu_-(E_{-,0}) = \mu_+(E_{+,0}) \geq \beta' - \mu_+(E_{-,0}) \geq 2\beta'/3$, proving that $\mu_+ \neq \mu_-$. Hence $W$ is not intrinsically ergodic.

\end{proof}

We now show that any $\mathbb{Z}^d$ subshift with the D*-condition has Widom-Rowlinson SFTs as factors.

\begin{theorem}\label{WRfactor}
For any $d > 1$ and any non-trivial $\mathbb{Z}^d$ subshift $X$ which has the D*-condition, there exists $N'$ so that for any
$R_2 \geq R_1 \geq N'$, there is a factor map $\phi : X \to W_{R_1, R_2}$.
\end{theorem}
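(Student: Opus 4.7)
The plan is to build a sliding-block factor map $\phi : X \to W_{R_1,R_2}$ by designating small marker patterns $u, v \in L_{[-n,n]^d}(X)$ to encode the two nonzero symbols of the Widom--Rowlinson SFT. Non-triviality of $X$ together with the D*-condition guarantees (as in the first paragraph of the proof of Lemma~\ref{dstarremoval}) that $|L_{[-n,n]^d}(X)| \ge 3$ for some $n$; fix two distinct patterns $u, v$ in that set, let $k = k_n$, and put $N' := 2(n+k)+1$, then fix $R_2 \ge R_1 \ge N'$.

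Define $\phi$ by a sliding-block code of radius $R_2 + n$ whose local rule outputs $+$ at $s$ exactly when $x(s + [-n,n]^d) = u$, no translate of $u$ in $x$ is centered within $\ell_\infty$-distance $R_1$ of $s$ other than at $s$ itself, and no translate of $v$ in $x$ is centered within $\ell_\infty$-distance $R_2$ of $s$; outputs $-$ under the symmetric condition with the roles of $u$ and $v$ swapped; and outputs $0$ otherwise. It is then immediate from the exclusion clauses that $\phi(X) \subseteq W_{R_1,R_2}$: any two nonzero outputs within distance $R_1$ (same sign) or $R_2$ (opposite sign) directly contradict one of the clauses.

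For surjectivity, given $y \in W_{R_1,R_2}$ with $S^\pm := \{s : y(s) = \pm\}$, the $W$-constraints together with $R_1 \ge 2(n+k)+1$ make the cubes $s + [-(n+k),n+k]^d$ pairwise disjoint as $s$ ranges over $S^+ \cup S^-$. Applying Lemma~\ref{dstarplacement} with these sites, patterns $u$ at $S^+$-centers and $v$ at $S^-$-centers, and any convenient $x_0 \in X$ for the background, yields $x \in X$ with the correct marker at each intended site. At an intended $S^\pm$-site $s$, the $W$-separation means the nearest other intended marker is at $\ell_\infty$-distance $> R_1$ (same sign) or $> R_2$ (opposite sign), so the exclusion clauses are satisfied by the intended markers alone and $\phi(x)(s) = y(s)$ provided no spurious markers intervene.

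The hard part will be showing $\phi(x)(s) = 0$ for $s \notin S^+ \cup S^-$: the background contributed by $x_0$, or the content produced by the D*-placement in the buffer zones around intended markers, could contain spurious occurrences of $u$ or $v$ that produce unintended $\pm$ outputs and, worse, could sit within $R_1$ or $R_2$ of an intended marker and disrupt the exclusion clauses there. I expect this to be resolved by one of the following refinements. One option is to replace $u$ and $v$ by much larger markers of shape $[-N,N]^d$ built via Lemma~\ref{dstarplacement} so as to be aperiodic and mutually incompatible, ensuring that no spurious overlap is compatible with membership in $X$. A cleaner alternative is to augment the placement-list in the surjectivity step with additional sites, via the same Lemma~\ref{dstarplacement}, carrying a third distinct ``blank'' pattern that locally prevents the emergence of spurious $u$ or $v$; a compactness argument on increasing finite exhaustions of $\mathbb{Z}^d$ then produces the desired preimage $x$.
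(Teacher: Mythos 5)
Your overall architecture matches the paper's: a sliding-block code keyed on occurrences of designated marker patterns with exclusion clauses enforcing the Widom--Rowlinson separations, and surjectivity via Lemma~\ref{dstarplacement}. The containment $\phi(X) \subseteq W_{R_1,R_2}$ is fine. But the part you explicitly defer --- ruling out spurious occurrences of $u$ and $v$ --- is the actual crux of the theorem, and neither of your two proposed refinements is carried out; as written there is a genuine gap. Concretely, two things go wrong with arbitrary distinct $u,v \in L_{[-n,n]^d}(X)$. First, self-overlap: if the intentional placement of $u$ at $s$ happens to create another occurrence of $u$ centered at $s + e_1$ (which arbitrary patterns certainly permit), your exclusion clause outputs $0$ at $s$ and surjectivity fails outright. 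Second, background occurrences: $u$ or $v$ may appear in the filler produced by Lemma~\ref{dstarplacement} or in the base point $x_0$, far from any intended site, producing unintended nonzero symbols. Nothing in the D*-condition prevents either phenomenon for generic small patterns.

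The paper resolves both issues with two ingredients you would need to supply. It first invokes Lemma~\ref{dstarremoval} to get a pattern $w$ whose removal leaves a subshift $X'$ with $h(X')>0$, and then applies Hochman's marker lemma (Proposition 3.5 of \cite{hochman}) \emph{inside $X'$} to obtain completions $w_+,w_-$ of a common annular marker. The marker property kills small-offset self-overlaps (your option (a)), and the fact that $w_+,w_- \in L(X')$ means they contain no copy of $w$, so seeding a dense equispaced grid of $w$'s throughout the complement of the intended placements (your option (b)) guarantees that every translate of the marker window other than an intended one contains a grid copy of $w$ --- either because it is disjoint from all intended placements, or because the marker property forces any overlap to have offset large enough to leave room for a grid cell of $w$ in the difference. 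Your two options are each half of this mechanism, but they only work in tandem, and the construction of suitable markers is not free: it requires the detour through positive entropy of $X'$, which is exactly why Lemma~\ref{dstarremoval} is in the paper. Until you construct markers with the non-self-overlap property that also provably avoid a seedable ``blank'' pattern, the surjectivity argument does not close.
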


\begin{proof}

Suppose that $X$ is a nontrivial $\zz^d$ subshift with the D*-condition, with associated sequence $(k_n)_{n \in \mathbb{N}}$ as in Definition~\ref{dstardef}. We may clearly assume without loss of generality that $(k_n)$ is nondecreasing, since replacing any $k_n$ with a larger integer preserves the conclusion of Definition~\ref{dstardef}. First, since $h(X) > 0$, by Lemma~\ref{dstarremoval}, we may choose $n \in \mathbb{N}$ and a pattern $w \in L_{[1,n]^d}(X)$ so that removing $w$ from $L(X)$ yields a nonempty subshift $X'$ with $h(X') > 0$. 

Now we'll use some results from \cite{hochman} to create some markers. In \cite{hochman}, for any $m > k$, a \textbf{marker} on $X'$ is defined to be any pattern $M \in L_{[-m,m]^d \setminus [-(m-k),m-k]^d}(X')$ with the property that if $x \in X'$ and $x([-m,m]^d \setminus [-(m-k),m-k]^d) = x(v + ([-m,m]^d \setminus [-(m-k),m-k]^d)) = M$ for some $v \in [-(2m-k+1), 2m-k+1]^d$, then $v = 0$. Informally, a marker is a word on a square annulus such that two copies may overlap, but not in such a way that one intersects the ``interior'' of the other. Note that $2m - k + 1 > k$, and so any marker also satisfies this definition when $[-(2m-k+1), 2m-k+1]^d$ is replaced by $[-k,k]^d$. Since $h(X') > 0$, Proposition 3.5 from \cite{hochman} guarantees, for any large enough $k$, the existence of a marker $M$ for some $m > k$ which can be completed to at least two patterns in $L_{[-m,m]^d}(X')$ (in fact it guarantees much more, but this is all that we'll need). We apply this to $k = 2n + 4k_n$, yielding patterns $w_+, w_- \in L_{[-m,m]^d}(X')$ (the completions of the marker $M$ to $[-m,m]^d$) with the following property: if $x \in X'$ and $x([-m,m]^d)$ and $x(v + [-m,m]^d)$ are both in $\{w_+, w_-\}$ for some $v \in [-(2n+4k_n),2n+4k_n]^d$, then $v = 0$. Clearly this implies that $m > 2n + 4k_n$ and that $w_+$ and $w_-$ contain no occurrences of $w$.

Now we claim that $N' = 2k_m + 2m + 1$ suffices to prove the theorem. To this end, choose any $R_1$ and $R_2$ where $R_2 \geq R_1 > 2k_m + 2m$. We define our factor map $\phi$ on $X$ as follows: if $x(v + [-m,m]^d) \notin \{w_+, w_-\}$, then $(\phi(x))(v) = 0$. If $x(v + [-m,m]^d) \in \{w_+, w_-\}$ and there exists $u \in [-R_1, R_1]^d$ so that $x(v + u + [-m,m]^d) \in \{w_+, w_-\}$, then $(\phi(x))(v) = 0$. If $x(v + [-m,m]^d) \in \{w_+, w_-\}$ and there exists $u \in [-R_2, R_2]^d$ so that 
$x(v + u + [-m,m]^d) \in \{w_+, w_-\}$ and $x(v + u + [-m,m]^d) \neq x(v + [-m,m]^d)$, then $(\phi(x))(v) = 0$. If $x(v + [-m,m]^d) \in \{w_+, w_-\}$ and none of the previous three rules applies, then $(\phi(x))(v) = +$ if $x(v + [-m,m]^d) = w_+$ and $(\phi(x))(v) = -$ if $x(v + [-m,m]^d) = w_-$. The reader may check that these rules are not contradictory, and so $\phi$ is a continuous shift-commuting map on $X$. It remains to show that $\phi$ is surjective, i.e. that $\phi(X) = W_{R_1, R_2}$. %(See Figure~\ref{IEpic3}.) \kmmargin{Figure~\ref{IEpic3} is about constructing a pre-image, not the definition of $\phi$.}

\begin{figure}[ht]
\centering
\includegraphics[scale=0.35]{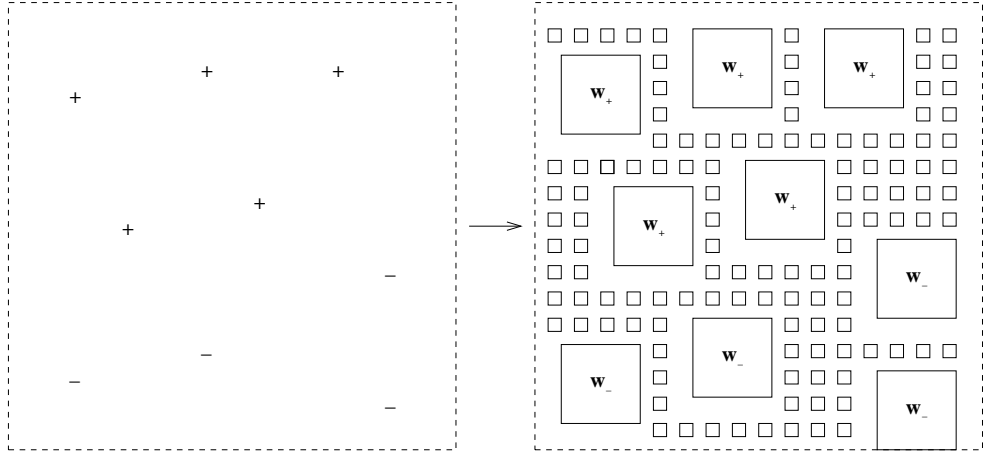}
\caption{Constructing $\phi$-preimage of a point of $W$ (smallest boxes are occurrences of $w$)}
\label{IEpic3}
\end{figure}

It is easy to see that $\phi(X) \subseteq W_{R_1, R_2}$, since the rules defining $\phi$ force any $\phi(x)$ to satisfy the local rules from Definition~\ref{WRdef}. We now prove the opposite inclusion. Choose any $y \in W_{R_1, R_2}$, and we will construct $x \in X$ so that $\phi(x) = y$ (see Figure~\ref{IEpic3}). We begin with an arbitrary $x' \in X$. We will use Lemma~\ref{dstarplacement} to change letters on $x'$ in several phases, eventually yielding the desired $x$. We begin by defining $x(v + [-m,m]^d) = w_+$ for every $v \in \mathbb{Z}^d$ for which $y(v) = +$, and  $x(v + [-m,m]^d) = w_-$ for every $v \in \mathbb{Z}^d$ for which $y(v) = +$. We may do this by Lemma~\ref{dstarplacement} since $R_1 > 2k_m + 2m$, and so each distinct pair $v + [-m,m]^d$, $v' + [-m,m]^d$ where we place $w_+$ and $w_-$ are distance at least $2k_m$ apart. From now on, we will call these occurrences of $w_+$ and $w_-$ in $x$ ``intentional placements.'' Then, for every $t \in \mathbb{Z}^d$ for which $(n+2k_n)t + [1,n]^d$ is a distance of at least $k_n$ from each of the intentional placements within $x$, we define 
$x((n+2k_n)t + [1,n]^d) = w$. Again, we may use Lemma~\ref{dstarplacement} to define $x \in X$ with the desired occurrences of $w$ at the desired locations, since all of the translates of $[1,n]^d$ on which we are placing $w$ are distance of at least $2k_n$ apart. Also note that since we only placed these copies of $w$ at translates of $[1,n]^d$ which are distance at least $k_n$ from all intentional placements, no letter in any intentional placement is changed during this step. It is obvious that $\phi(x)$ agrees with $y$ on all of the nonzero symbols in $y$, so it remains to show that $(\phi(x))(v) = 0$ at all $v$ for which $y(v) = 0$. 

For this purpose, consider any $v \in \mathbb{Z}^d$ at which $y(v) = 0$, meaning that $x(v + [-m,m]^d)$ is not an intentional placement. We assume for a contradiction that $x(v + [-m,m]^d) \in \{w_+, w_-\}$.
Then we consider two cases. First, assume that $x(v + [-m,m]^d)$ does not even overlap an intentional placement. In this case, consider the cube $S$ of the form $(n+2k_n)t + [1,n]^d$ whose center is closest to the center $v$ of $v + [-m,m]^d$. Clearly the distance between $v$ and the center of $S$ is less than or equal to $\frac{n+2k_n}{2}$. Since 
$m > n + 2k_n$, this means that $S$ is contained within $v + [-m,m]^d$ and is distance at least $m - n - k_n$ from the boundary of $v + [-m,m]^d$. Therefore $S$ also has distance of at least $m - n - k_n$ from the closest intentional placement. %\kmmargin{So we're not using the marker property in this case?} 
This is greater than $k_n$ since $m > n + 2k_n$, and so by definition of $x$, we have that $x(S) = w$. Since $S \subset v + [-m,m]^d$ and $w_+, w_- \in L(X')$, this means that $x(v + [-m,m]^d) \notin \{w_+, w_-\}$, a contradiction.

We now deal with the case where $x(v + [-m,m]^d)$ does overlap an intentional placement. We first note that since $R_1 > 2k_m + 2m$, if $x(v + [-m,m]^d)$ overlaps an intentional placement, then every other intentional placement is distance at least $2k_m$ from $x(v + [-m,m]^d)$. Given this, define the unique 
$u \in \mathbb{Z}^d$ with $\|u\|_{\infty} < m$ so that $x(u + v + [-m,m]^d)$ is an intentional placement. (Note that $u \neq 0$.) Since $w_+$ and $w_-$ have the marker property, we have that 
$\|u\|_{\infty} > 2n + 4k_n$. Then regardless of $u$, $(v + [-m,m]^d) \setminus (u + v + [-m,m]^d)$ contains a translate of $[1, 2n + 4k_n]^d$, which we denote by $T$. As in the previous paragraph, if we define $S$ to be the cube of the form $(n+2k_n)t + [1,n]^d$ whose center is closest to the center of $T$, then $S$ is contained within $T$ and has distance at least $\frac{2n + 4k_n}{2} - n - k_n > k_n$ from the boundary of $T$, and therefore distance of more than $k_n$ from the intentional placement $x(u + v + [-m,m]^d)$. It is also a distance of at least $2k_m > k_n$ from all other intentional placements by the observation made above, since $S$ is contained within $v + [-m,m]^d$. Therefore, by construction of $x$, we have $x(S) = w$, and again, since $S \subset v + [-m,m]^d$, we see that $x(v + [-m,m]^d) \notin \{w_+, w_-\}$, a contradiction. We've now dealt with all possible cases, and so 
$x(v + [-m,m]^d) \notin \{w_+, w_-\}$. This implies that $(\phi(x))(v) = 0$, and so $y = \phi(x)$. Since $y \in W_{R_1, R_2}$ was arbitrary, we've also shown that $\phi(X) = W_{R_1, R_2}$.

\end{proof}

\begin{proof}[Proof of Theorem~\ref{badfactor}]

Theorem~\ref{badfactor} is a consequence of Theorems~\ref{WRnotIE} and \ref{WRfactor}. For any $X$ with the D*-condition, define $R_1$ to be the $N'$ satisfying Theorem~\ref{WRfactor}, and then define $R_2$ to be $N = N(R_1)$ from Theorem~\ref{WRnotIE}. Then $W_{R_1, R_2}$ is a factor of $X$ which is not intrinsically ergodic, and it was shown in the proof of Theorem~\ref{WRnotIE} that $h(W_{R_1, R_2}) \geq \frac{\log 2}{(R_1 + 1)^d} > 0$. 

\end{proof}

\section{Proof of Theorem~\ref{goodfactors}}
\label{goodproof}

We first define the examples of Hochman that we'll use to prove Theorem~\ref{goodfactors}. Here we only briefly summarize the examples and prove a few technical facts about them; for a full treatment, see \cite{hochman}. 

For any positive integer $k\in\nz$, a $\zt$ \sft\ $X_k$ is defined with alphabet $\ag_k$ consisting of $32+k$ symbols, which are most conveniently thought of as square tiles of unit length. Thirty-two of the symbols of $\ag_k$ are defined by assigning a tile one of four colors and one of eight types of arrows. For reasons which will become clear soon, the colors should be thought of as representing the four directions NW (northwest), NE (northeast), SW (southwest), and SE (southeast). The eight types of arrows are four straight arrows in the four cardinal directions (up, down, right, left), and four ``corner'' arrows turning ninety degrees clockwise. The remaining $k$ tiles are called ``blanks'' and are labeled with an associated integer between $1$ and $k$ inclusive. The symbols of $\ag_k$ appear in Figure \ref{PavlovF1}.

\begin{figure}[ht]
\centering
\includegraphics[scale=0.6]{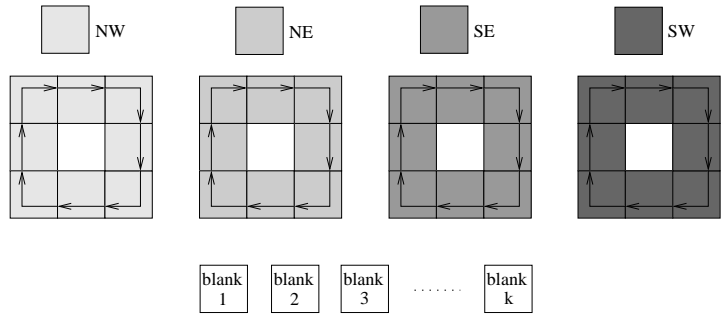}
\caption{The $32+k$ different symbols of $\ag_k$}
\label{PavlovF1}
\end{figure}

We define $X_k$ to be the SFT consisting of all points $x$ in $A_k^{\mathbb{Z}^2}$ for which every $2 \times 2$ subpattern of $x$ appears as a subpattern of the pattern in Figure~\ref{PavlovF2} for some choice of the labels of the blank symbols. Since for the purposes of these legal $2\times 2$ patterns, all blanks are considered indistinguishable, we may in the future suppress the labels of blank tiles, with the understanding that the $k$ blank tiles are completely interchangeable in elements of $X_k$.

\begin{figure}[ht]
\centering
\includegraphics[scale=0.7]{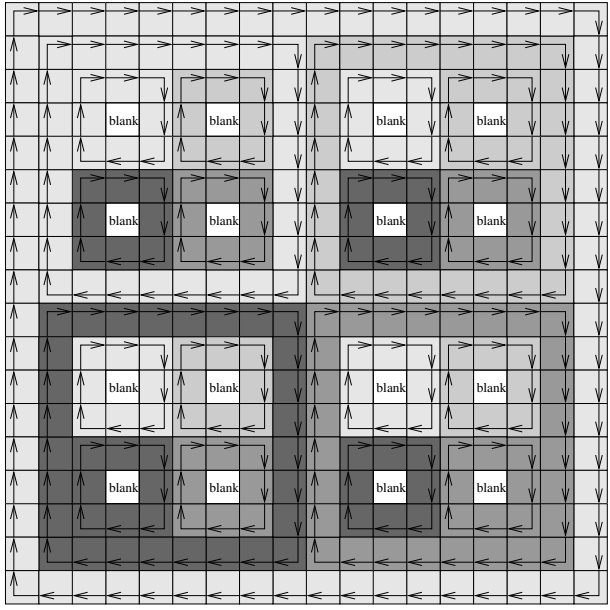}
\caption{Part of a point of $X_k$}
\label{PavlovF2}
\end{figure}

We inductively define valid patterns $P_n \in L_{[1,5 \cdot 2^n - 4]^d}(X_k)$ as follows: $P_0$ is a blank tile, and for any $n$, $P_{n+1}$ is created as follows. Create four square patterns by surrounding four copies of $P_n$ by a clockwise circuit of arrows colored in each of the four ways $\{NW, NE, SW, SE\}$. Then, concatenate those four square patterns into a larger square pattern $P_{n+1}$, where each of the four ``quadrants'' has arrows according to its location. As an example, the pattern appearing in Figure \ref{PavlovF2} is $P_3$ surrounded by a circuit of NW-colored arrows. The figure is also the upper-left quadrant of $P_4$. We call the pattern $P_n$ the \textbf{level-$n$ subsquare} of $X_k$. It is verified in \cite{hochman} that $P_n \in L(X_k)$ for all $n\in\nz$, and in fact we will see that in some sense most points in $X_k$ are built up out of the $P_n$. (We are still not concerning ourselves with the labeling of blank tiles, so each $P_n$ actually corresponds to many patterns in $L(X_k)$ depending on how the blanks are labeled.)

For any $\omega=(\omega_1,\omega_2,\ldots)\in\{NW, NE, SW, SE\}^{\mathbb{N}}$, we define $x_{\omega}\in X_k$ as follows. Begin by defining $x_{\omega}(0)$ to be a blank symbol. For any $n\geq 1$, assume that $x_{\omega}$ has already been defined on a square $B_n$ which is a translate of $[1, 5 \cdot 2^n - 4]^2$ and that $x_{\omega}(B_n) = P_n$. Then, surround $x_{\omega}(B_n)$ by a circuit of arrows colored by $\omega_n$. The resulting pattern appears as a subpattern of $P_{n+1}$ exactly once (in the proper quadrant), and so there is a unique way to extend $x_{\omega}$ to a square $B_{n+1}$ which is some translate of $[1, 5 \cdot 2^{n+1} - 4]^2$ so that $x_{\omega}(B_{n+1}):=P_{n+1}$. In addition, since each step includes surrounding by a circuit of arrows, it is easily checked by induction that $[-n,n]^2 \subseteq B_n$ for all $n\in\nz$, and so $\bigcup_{n=1}^{\infty} B_n=\zt$. Thus, $x_{\omega}$ is eventually defined on all of $\zt$ in this way.

It is proven in \cite{hochman} that every element $x\in X_k$ is of one of two types. Either $x=\sigma_t(x_{\omega})$ for some $t\in\zt$ and $\omega\in \{NW, NE, SW, SE\}^\nz$, or $x$ is what Hochman calls an ``exceptional point.'' There are five types of exceptional points, shown in Figure~\ref{PavlovF3} without colorings of the arrows. Every exceptional point is of one of these five types, up to a possible rotation and coloring allowed by the rules of $X_k$. Note that exceptional points contain no blanks, and so the sets of exceptional points in $X_k$ for any $k\in\nz$ coincide. We can therefore denote the set of exceptional points by $E$, regardless of $k$. 

\begin{figure}[ht]
\centering
\includegraphics[scale=0.25]{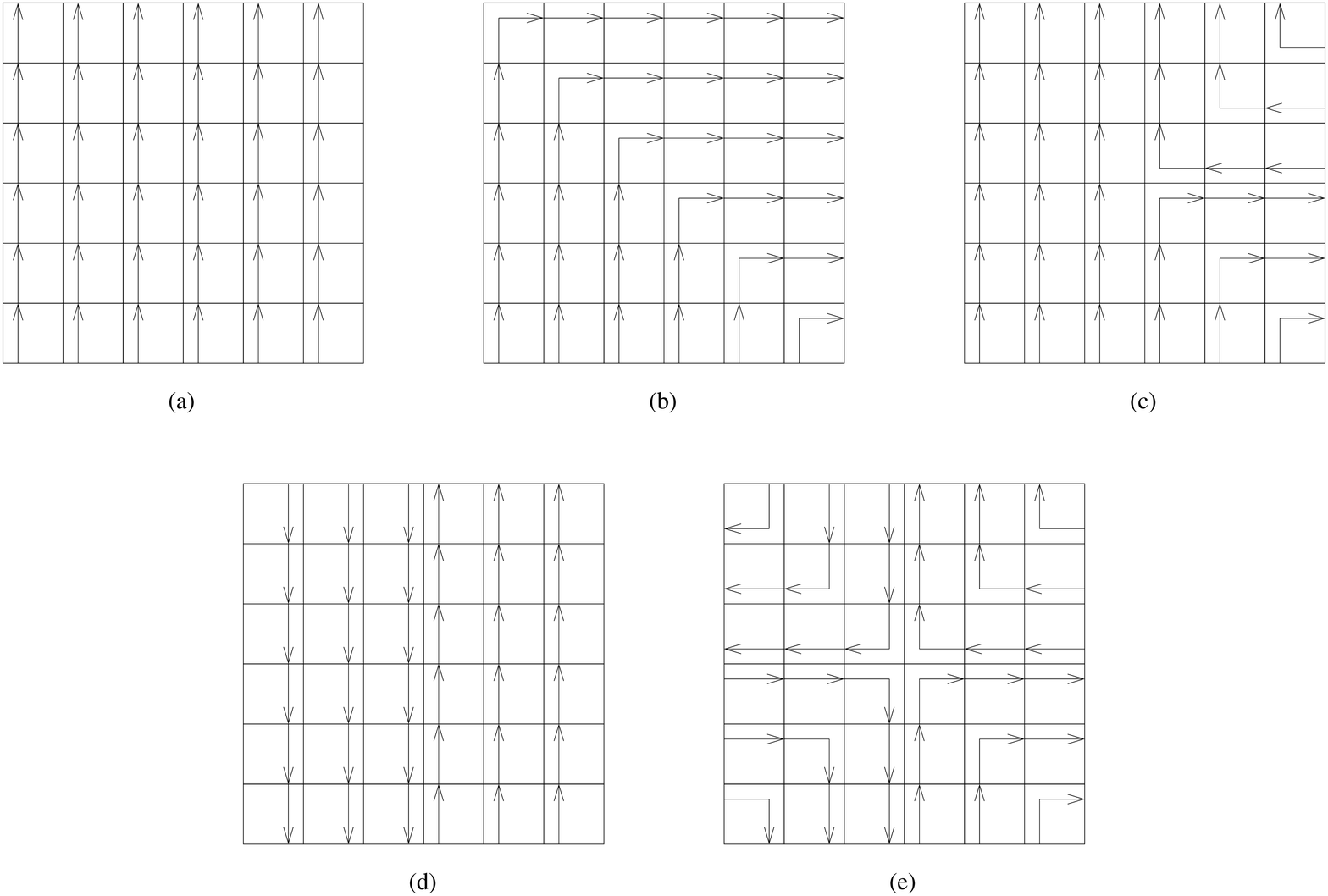}
\caption{Exceptional points of $X_k$ (uncolored)}
\label{PavlovF3}
\end{figure}

The following technical properties about $X_k$ follow fairly easily from its definition, but we present brief proofs for completeness.

\begin{proposition}\label{hochmanprops}

$X_k$ has the following properties:

\begin{enumerate}

\item Suppose $x \in X_k$ and $x(v)$ is a blank. For any $n \in \mathbb{N}$ and $u$ in $\mathbb{Z}^2$, if $v \in u+[-n,n]^2$, then there is a subset $S \subset \mathbb{Z}^2$ such that $u + [-n,n]^2 \subset S$ and $x(S)$ is a level-$2n$ subsquare in $x$.
%\item For any $k > n$, knowledge of the locations of all blank symbols uniquely determines the entire point.
%\item For any $n \in \mathbb{N}$ and any nonexceptional $x \in X_k$, the set $S_x \subset \mathbb{Z}^2$ of locations of lower-left corners of level-$n$ subsquares within $x$ is a Cartesian product of sets \km{$A_x, B_x \subset \mathbb{Z}$}. 
\item For every $n$ and every nonexceptional $x \in X_k$, its upper limiting frequency of occurrences of level-$n$ subsquares is positive:
\[
\limsup_{k \rightarrow \infty} \frac{|[-k,k]^2 \cap S_x|}{(2k+1)^2} > 0,
\]
where $S_x$ is the set of locations of lower-left corners of level-$n$ subsquares in $x$.

\end{enumerate}

\end{proposition}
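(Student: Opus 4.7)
The plan is to prove a single auxiliary claim, from which both parts follow with minimal extra work: for every $m \geq 0$, every blank tile appearing in the pattern $P_m$ sits at $\ell_\infty$-distance at least $m$ from the boundary of $P_m$. I would prove this by induction on $m$. The base case $m=0$ is immediate, since $P_0$ is itself a single blank. For the inductive step, any blank in $P_{m+1}$ lies inside one of the four quadrant-colored level-$m$ subsquares of $P_{m+1}$; by construction that level-$m$ subsquare is separated from the two nearest sides of $P_{m+1}$ only by the surrounding width-$1$ circuit of arrows (and by distance $s_m+3$ from the other two sides, where $s_m = 5\cdot 2^m - 4$). Combining the inductive hypothesis (distance $\geq m$ within the level-$m$ subsquare) with the extra $+1$ from the arrow circuit yields distance $\geq m+1$ from the boundary of $P_{m+1}$.

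For part (1), since exceptional points contain no blanks, $x$ is nonexceptional, so after a translation I may assume $x = x_\omega$. The blank $v$ lies in $B_m$ for all sufficiently large $m$ (since $\bigcup_m B_m = \zt$), and within $B_m = P_m$ it is contained in a unique nested sequence of level-$i$ subsquares for $0 \leq i \leq m$. Taking $m \geq 2n$ and letting $S$ be the unique level-$2n$ subsquare in $x$ containing $v$, the auxiliary claim guarantees that $v$ has distance at least $2n$ from the boundary of $S$. Hence any $u$ with $\|u - v\|_\infty \leq n$ has distance at least $n$ from the boundary of $S$, so $u + [-n,n]^2 \subseteq S$ as required.

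For part (2), again assume $x = x_\omega$ (the quantity in question is translation-invariant). For each $m \geq n$, the construction of $P_m$ by iterated quadrant subdivision shows that $P_m = x(B_m)$ contains exactly $4^{m-n}$ level-$n$ subsquares, each contributing a distinct lower-left corner to $S_x$. Since $[-m,m]^2 \subseteq B_m$ and $B_m$ is a translate of $[1, s_m]^2$, we have $B_m \subseteq [-K_m, K_m]^2$ for $K_m = s_m - m - 1$. Therefore
\[
\frac{|S_x \cap [-K_m, K_m]^2|}{(2K_m + 1)^2} \;\geq\; \frac{4^{m-n}}{(2s_m)^2} \;\geq\; \frac{1}{100 \cdot 4^n}
\]
for all sufficiently large $m$, and passing to $\limsup_{k \to \infty}$ along the sequence $k = K_m$ yields the desired positive lower bound.

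The main obstacle is isolating and proving the right distance-from-boundary bound for blanks inside $P_m$; once this $\geq m$ inductive bound is in hand, both parts are essentially routine, with part (1) reducing to a comparison of radii and part (2) to a crude volume-to-count comparison inside $B_m$.
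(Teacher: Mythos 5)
Your proof is correct and follows essentially the same route as the paper: part (1) rests on an induction exploiting the one-cell-wide circuit of arrows added at each level of the hierarchy (you phrase the invariant as ``a blank in $P_m$ is at distance $\geq m$ from the boundary'' while the paper inducts directly on the radius $n$, gaining one unit of padding per two levels, but the underlying observation is identical), and part (2) is the paper's exact counting argument, comparing the $4^{m-n}$ level-$n$ subsquares inside $P_m$ to the area of $B_m$ to obtain the bound $1/(100\cdot 4^n)$. No gaps.
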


\begin{proof}

$ $\\

(1): We prove this statement by induction. The statement is trivial for $n = 0$, since level-$0$ subsquares are precisely blank symbols. Assume the statement for $n$. Then, if $x(v)$ is a blank and $v \in u +[-(n+1),n+1]^2$, then there exists $t \in [-1,1]^2$ so that $v \in u + t + [-n,n]^2$, and then $u + t + [-n,n]^2$ is contained entirely within a level-$2n$ subsquare by the inductive hypothesis. However, this level-$2n$ subsquare must be part of a level-$(2n+1)$ subsquare in $x$, in which the original level-$2n$ subsquare is surrounded by a circuit of arrows. Then $u + t + [-(n+1), n+1]^2$ is clearly contained entirely within that level-$(2n+1)$ subsquare. Finally, that level-$(2n+1)$ subsquare must be part of a level-$(2n+2)$ subsquare in $x$, in which it is surrounded by a circuit of arrows, and clearly $u + [-(n+1), n+1]^2$ is contained entirely within that level-$(2n+2)$ subsquare.\\

(2): First, note that for any $k \geq n$, the number of level-$n$ subsquares within any level-$k$ subsquare is $4^{k-n}$. Then, for any $\omega$ and any $k$, $x_{\omega}([-(5 \cdot 2^k - 4), 5 \cdot 2^k - 4]^2)$ contains a 
level-$k$ subsquare, and so contains at least $4^{k-n}$ level-$n$ subsquares. Therefore, the upper limiting frequency of level-$n$ subsquares in $x_{\omega}$ is at least
\[
\lim_{k \rightarrow \infty} \frac{4^{k-n}}{(10 \cdot 2^k - 7)^2} = \frac{1}{100 \cdot 4^n}.
\]

It is clear that translating a point does not change its upper limiting frequency of level-$n$ subsquares, and so this extends to all nonexceptional points $\sigma_t(x_{\omega})$.

\end{proof}

In the following definition, we introduce some auxiliary objects that will be useful in the proof of Theorem~\ref{goodfactors}.

\begin{definition}
 Let $n$ be in $\mathbb{N}$. Let $Y_{1,n}$ be the $\mathbb{Z}^2$ SFT obtained by replacing the lower-left corner of every level-$n$ subsquare in $X_1$ with the symbol $c$. (We note for reference that $Y_{1,n}$ is topologically conjugate to $X_1$, with conjugacy given by the $1$-block map that replaces each $c$ by the SW corner arrow.) For $m>1$, let $\mathcal{B}_m$ be the disjoint union of the alphabet of $X_1$ and the symbols $(c,i)$, where $i = 1, \dots, m$. Define the $1$-block map $\pi$ from $\mathcal{B}_m$ to the alphabet of $Y_{1,n}$ that acts as the identity on the alphabet of $X_1$ and sends each $(c,i)$ to $c$. We omit the dependence of $\pi$ on $m$ and $n$.) Let $Y_{m,n}$ be the $\mathbb{Z}^2$ SFT with alphabet $\mathcal{B}_m$ consisting of those points $x$ in $(\mathcal{B}_m)^{\mathbb{Z}^2}$ such that $\pi(x) \in Y_{1,n}$. % obtained by ``splitting'' the symbol $c$ into the $m$ symbols $\{(c,i)\}_{i=1}^m$. Also, define the $1$-block factor map $\pi : Y_{m,n} \to Y_{1,n}$ (omitting the dependence of $\pi$ on $m$ and $n$) that acts as the identity on the alphabet of $X_1$ and sends each $(c,i)$ to $c$.
\end{definition}

For notation, we let $\alpha_n$ be the supremum of upper limiting frequences of level-$n$ subsquares in $X_1$, which is positive by part (2) of Proposition~\ref{hochmanprops}. We now present some lemmas regarding the subshifts $Y_{m,n}$ that will be used in the proof of Theorem~\ref{goodfactors}. When $m$ and $n$ are fixed and $N > n$, a level-$N$ subsquare of $Y_{m,n}$ will be understood to mean a level-$N$ subquare of $X_1$ in which all lower-left corners of level-$n$ subsquares have been replaced by various $(c,i)$, $i = 1, \dots, m$. Let $\mathcal{S}_N$ denote the set of level-$N$ subsquares in $Y_{m,n}$. Note that $Y_{m,n}$ contains the same set $E$ of exceptional points as $X_1$. 

\begin{lemma} \label{YmnEntropy}
Suppose $m$ and $n$ are in $\mathbb{N}$. Then $h(Y_{m,n}) = \alpha_n \log m$.
\end{lemma}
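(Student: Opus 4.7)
The plan is to prove the matching inequalities $h(Y_{m,n}) \leq \alpha_n \log m$ and $h(Y_{m,n}) \geq \alpha_n \log m$ separately, exploiting that $Y_{m,n}$ is an $m$-fold labeled extension of $Y_{1,n}$: a point of $Y_{m,n}$ consists of a point of $Y_{1,n}$ together with an unconstrained choice of label in $\{1,\ldots,m\}$ at each $c$-position. Since $Y_{1,n}$ is conjugate to $X_1$, and $X_1$ has zero entropy (every pattern in $L_{[1,N]^2}(X_1)$ is a subpattern of some $P_k$ with $5\cdot 2^k - 4 \geq N$, yielding polynomially many patterns in $N$), all of the entropy of $Y_{m,n}$ should arise from the free labeling, at asymptotic $c$-density $\alpha_n$.

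For the lower bound, by the definition of $\alpha_n$ as a supremum, for any $\epsilon > 0$ I can choose a nonexceptional $x \in X_1$ whose upper limiting frequency of level-$n$ subsquares is at least $\alpha_n - \epsilon$. Thus for infinitely many $N$, some translate of $[1,N]^2$ meets at least $(\alpha_n - \epsilon) N^2$ lower-left corners of level-$n$ subsquares. The corresponding pattern in $Y_{1,n}$ has at least that many $c$'s, and each can be independently labeled in $m$ ways to produce distinct patterns in $L_{[1,N]^2}(Y_{m,n})$. Therefore $|L_{[1,N]^2}(Y_{m,n})| \geq m^{(\alpha_n - \epsilon) N^2}$; taking $\log$, dividing by $N^2$, and letting $N \to \infty$ then $\epsilon \to 0$ gives $h(Y_{m,n}) \geq \alpha_n \log m$.

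For the upper bound, I would apply the Variational Principle and bound $h(\mu)$ for an arbitrary $\mu \in \mathcal{M}(Y_{m,n})$. Let $\nu = \pi_* \mu \in \mathcal{M}(Y_{1,n})$. For any pattern $p \in L_{[1,N]^2}(Y_{1,n})$ with $c_p$ occurrences of $c$, the $\pi$-fiber over $p$ in $L_{[1,N]^2}(Y_{m,n})$ has exactly $m^{c_p}$ elements, so the standard decomposition of entropy into marginal plus conditional gives
\[
H_{\mu,N} = H_{\nu,N} + \sum_{p} \nu(p)\, H(\mu(\cdot \mid p)) \leq H_{\nu,N} + \log m \cdot \sum_p \nu(p)\, c_p,
\]
where Proposition~\ref{EntropyCardUB} bounds each conditional entropy by $\log(m^{c_p}) = c_p \log m$. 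By shift-invariance, $\sum_p \nu(p)\, c_p = N^2 \nu([c])$. Dividing by $N^2$ and passing to the limit yields $h(\mu) \leq h(\nu) + \nu([c]) \log m$. Since $h(\nu) \leq h(Y_{1,n}) = 0$ and $\nu([c]) \leq \alpha_n$, I conclude $h(\mu) \leq \alpha_n \log m$. The bound $\nu([c]) \leq \alpha_n$ follows from Birkhoff's ergodic theorem applied to $\mathbf{1}_{[c]}$: the Birkhoff averages exist $\nu$-a.e., are bounded above pointwise by the upper limiting frequency of $c$'s (which is $\leq \alpha_n$ by definition), and their integral is $\nu([c])$.

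The main technical hurdle is the justification of $h(X_1) = 0$, which requires a careful count of the patterns of shape $[1,N]^2$ in $X_1$. For nonexceptional points, any such pattern is a subpattern of some $P_k$ for $k = O(\log N)$, yielding $O(N^c)$ patterns for some constant $c$; the five types of exceptional points contribute at most $O(N^c)$ additional patterns. Hence $\log |L_{[1,N]^2}(X_1)| = O(\log N) = o(N^2)$, giving $h(X_1) = 0$.
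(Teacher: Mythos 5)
Your proposal is correct. The lower bound is the same as the paper's: pick points of $X_1$ realizing level-$n$ corner frequency close to $\alpha_n$ on large windows and label the resulting $c$'s independently in $m$ ways. Your upper bound, however, takes a genuinely different route. The paper argues by direct counting, bounding $|L_{[1,N]^2}(Y_{m,n})| \leq |L_{[1,N]^2}(X_1)|\, m^{(\alpha_n+\epsilon_N)N^2}$; this requires the uniform finite-window estimate that the maximal frequency of level-$n$ corners in any $[1,N]^2$-window is at most $\alpha_n+\epsilon_N$ with $\epsilon_N\to 0$, a fact the paper states without justification and which itself needs a small empirical-measure or compactness argument, since $\alpha_n$ is defined via limsups of centered windows of single points. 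You instead invoke the Variational Principle and bound $h(\mu)\leq h(\nu)+\nu([c])\log m$ for each invariant $\mu$ via the chain rule for entropy and Proposition~\ref{EntropyCardUB}, reducing everything to $\nu([c])\leq\alpha_n$, which follows cleanly from the Birkhoff theorem; this is essentially the mechanism the paper deploys later in the proof of Lemma~\ref{YmnIE}, so your argument is a legitimate and arguably more self-contained alternative. Both routes rest on $h(X_1)=0$, which the paper simply asserts. Your sketch of that fact is slightly imprecise: a pattern of shape $[1,N]^2$ that straddles the junction of high-level blocks (e.g.\ the central cross of $P_{k'}$ for huge $k'$) is a subpattern of $P_k$ only for $k$ possibly much larger than $O(\log N)$, so one must instead argue, using the self-similarity of the $P_k$ and the bounded data carried by the visible circuits of arrows, that the set of $[1,N]^2$-subpatterns over \emph{all} $P_k$ is still polynomial in $N$. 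Since the paper uses the same unproved input, this does not affect the validity of your proof relative to the paper's.
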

\begin{proof}
 For a point $x$ in $X_1$, let $f_N(x)$ be the frequency of lower-left corners of level-$n$ subsquares in $x$ that appear in $[1,N]^2$. By definition of $\alpha_n$, there is a sequence $\{\epsilon_N\}_N$ tending to $0$ from above such that $\max_x f_N(x) \leq \alpha_n + \epsilon_N$ for all $N$. Also, there is a sequence of points $\{x_N\}_N$ in $X_1$ such that $\{f_N(x_N)\}_N$ tends to $\alpha_n$. Then 
\begin{equation*}
 m^{f_N(x_N) N^2} \leq |L_{[1,N]^2}(Y_{m,n})| \leq |L_{[1,N]^2}(X_1)| m^{(\alpha_n + \epsilon_N)N^2},
\end{equation*}
where the second inequality holds because each pattern in $Y_{m,n}$ is determined by a pattern $u$ in $X_1$ and a choice of symbol from $\{(c,i)\}_{i=1}^m$ at the lower-left corner of each level-$n$ subsquare in $u$.
Taking logs, dividing by $N^2$, and letting $N$ tend to infinity, we obtain
\begin{equation*}
 \alpha_n \log m \leq h(Y_{m,n}) \leq h(X_1) + \alpha_n \log m.
\end{equation*}
Since $h(X_1) = 0$, we see that $h(Y_{m,n}) = \alpha_n \log m$.
\end{proof}

\begin{lemma} \label{hochmanfactorUniqueness}
Suppose $m$ and $n$ are in $\mathbb{N}$. Then any measure $\mu$ in $\mathcal{M}(Y_{m,n})$ such that $\mu(E) = 0$ is uniquely determined by the values $\mu(u)$, for all level-$N$ subsquares $u$.
%\begin{equation} \label{GoHuskies}
%\biggl\{ \{x\} : x \in E \biggr\} \bigcup \biggl\{ [w] : w \in \bigcup_N \mathcal{S}_N \biggr\}.
%\end{equation}
\end{lemma}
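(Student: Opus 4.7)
The plan is to show that $\mu$ is determined by its values on all cylinders $[w]$ for $w$ a finite pattern with shape $S$, which in turn are determined by the prescribed values $\mu(u)$ for $u\in\mathcal{S}_N$, $N\geq n$. Throughout, write $R_N=[1,5\cdot 2^N-4]^2$ for the shape of a level-$N$ subsquare.

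First I would exploit the hierarchical structure of non-exceptional points. Since $\mu(E)=0$, for $\mu$-a.e.\ $x\in Y_{m,n}$ the projection $\pi(x)$ equals $\sigma_t(x_\omega)$ in $X_1$ (via the conjugacy $Y_{1,n}\cong X_1$) for some $t\in\zt$ and $\omega\in\{NW,NE,SW,SE\}^\nz$. Hochman's inductive construction gives $[-N,N]^2\subseteq B_N$, so for any finite shape $S$, the level-$N$ subsquare $B_N-t$ of $x$ contains both $0$ and $S$ whenever $N\geq |t|_\infty+\max_{s\in S}|s|_\infty$. Hence, setting
\[
F_N \;=\; \bigl\{x\in Y_{m,n}: \text{some level-}N\text{ subsquare of }x\text{ contains }S\cup\{0\}\bigr\},
\]
we have $Y_{m,n}\setminus E\subseteq\bigcup_N F_N$; since each level-$N$ subsquare of a non-exceptional point is contained in the unique level-$(N+1)$ subsquare covering it, $F_N\subseteq F_{N+1}$, so $\mu(F_N)\to 1$.

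Next I would decompose $[w]\cap F_N$ disjointly. For each pair $(u,v)$ with $u\in\mathcal{S}_N$, $0\in v+R_N$, $S\subseteq v+R_N$, and $u(S-v)=w$, set $C_{u,v}=\{x: x(v+R_N)=u\}$. Hochman's rigidity (every occurrence of a level-$N$ subsquare pattern appears only as a bona fide level-$N$ subsquare, a property lifted from $X_1$ to $Y_{m,n}$ through $\pi$) together with the non-overlap of distinct level-$N$ subsquares in any non-exceptional point force the union $\bigsqcup_{(u,v)}C_{u,v}=[w]\cap F_N$ to be disjoint: two admissible pairs $(u,v)\neq(u',v')$ would yield two level-$N$ subsquares both containing the nonempty set $S$, so their shapes $v+R_N$ and $v'+R_N$ would overlap, a contradiction. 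By shift-invariance of $\mu$, each $\mu(C_{u,v})=\mu([u])=\mu(u)$, and hence
\[
\mu([w]\cap F_N)\;=\;\sum_{(u,v)}\mu(u);
\]
letting $N\to\infty$ expresses $\mu([w])$ explicitly in terms of the prescribed values.

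The main obstacle will be carefully verifying the required structural properties in $Y_{m,n}$: that every finite $S$ is eventually inside a single level-$N$ subsquare of any non-exceptional point, rigidity of level-$N$ subsquare patterns, and non-overlap of distinct level-$N$ subsquares. All three propagate from the corresponding facts for $X_1$ in \cite{hochman} through the $1$-block factor $\pi:Y_{m,n}\to Y_{1,n}$ and the conjugacy $Y_{1,n}\cong X_1$, both of which preserve the hierarchical tiling; the preimage under $\pi$ of a level-$N$ subsquare of $X_1$ is exactly the finite collection of elements of $\mathcal{S}_N$ obtained by labeling the lower-left corners of its $4^{N-n}$ level-$n$ subsquares.
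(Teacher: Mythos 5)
Your proof is correct and follows essentially the same route as the paper's: both reduce $\mu$ to its values on cylinder sets $[w]$, use the fact that every finite shape in a non-exceptional point is eventually contained in a level-$N$ subsquare (together with $\mu(E)=0$) to cover $[w]$ up to a null set, and then decompose $[w]$ into disjoint cylinders over level subsquares whose measures are the prescribed values. The only cosmetic difference is that the paper performs a single disjoint decomposition over \emph{minimal} level subsquares containing $w$ across all levels, whereas you fix a level $N$ and pass to the limit $N \to \infty$; both versions rest on the same nesting and non-overlap properties of the hierarchical structure.
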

\begin{proof}
Let $\mu$ be in $\mathcal{M}(Y_{m,n})$ such that $\mu(E) = 0$. Then $\mu$ is uniquely determined by its values on the collection of cylinder sets $[w]$, where $w$ is in $L(Y_{m,n})$. Let $w$ be in $L(Y_{m,n})$, and suppose $w$ has shape $S$. Let us show that $\mu(w)$ is uniquely determined by the values of $\mu$ on the cylinder sets defined by level-$N$ subsquares for all $N$.

Observe that if $u$ is a level-$N_1$ subsquare containing a copy of $w$ and $v$ is a level-$N_2$ subsquare containing a copy of $w$, then one of the following holds: i) $[u]$ and $[v]$ are disjoint, ii) $[u]$ is contained in $[v]$, or iii) $[v]$ is contained in $[u]$. Now define $\mathcal{U}(w)$ to be the set of patterns $u$ such that $u$ is a level-$N$ subsquare for some $N$, $u$ contains a copy of $w$, and $u$ does not contain any level-$N'$ subsquare which contains a copy of $w$ for any $N' < N$.
%\begin{equation*}
%\mathcal{U} = \biggl\{ \sigma_p(u) : u \in \bigcup_{N} \mathcal{S}_N, \, (\sigma_p(u))([1,N']^2) = w \biggr\}.
%\end{equation*}
%Also, let $E(w) = \{ x \in E : \, x(S)  = w \}$.

By Proposition~\ref{hochmanprops}, for every non-exceptional point $x$ in $Y_{m,n}$ there exists $N$ such that $x(S)$ is entirely contained in a level-$N$ subsquare. Therefore the symmetric difference between $[w]$ and the disjoint union $\bigcup_{u \in \mathcal{U}(w)} [u]$ is contained in $E$. Since $\mu(E) = 0$, we have that $\mu(w)$ is the sum of the values $\mu(u)$, for $u$ in $\mathcal{U}(w)$.
%Therefore, we have that
%\begin{equation*}
%[w] = \biggl( \bigcup_{x \in E(w)} \{x\} \biggr) \bigcup \biggl(\bigcup_{\sigma_p(u) \in \mathcal{U}} %[\sigma_p(u)] \biggr).
%\end{equation*}
%Since any invariant measure gives $[\sigma_p(u)]$ the same measure as $[u]$, we see that the measure of $[w]$ is uniquely determined by the measure of the sets in (\ref{GoHuskies}). 
Since $w$ was arbitrary, we conclude that the measure $\mu$ is uniquely determined as desired.
\end{proof}

\begin{lemma} \label{YmnIE}
 Suppose $m>1$ and $n$ is in $\mathbb{N}$. Then $Y_{m,n}$ is intrinsically ergodic.
\end{lemma}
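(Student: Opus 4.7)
Let $\mu$ be a measure of maximal entropy on $Y_{m,n}$; by Lemma~\ref{YmnEntropy}, $h(\mu) = \alpha_n \log m$. The first step is to show $\mu(E) = 0$. The set $E$ is a closed, shift-invariant subshift with zero topological entropy (by the classification in \cite{hochman}: each of the five types of exceptional points contributes only finitely many $\zz^2$-orbits up to coloring). By affinity of entropy on $\mathcal{M}(Y_{m,n})$, if $\mu(E) > 0$ then decomposing $\mu$ as the convex combination of its normalized restrictions to $E$ and $E^c$ would force $h(\mu) < h(Y_{m,n})$, a contradiction.

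Next I would consider $\nu = \pi_*\mu \in \mathcal{M}(Y_{1,n})$, which satisfies $\nu(E)=0$ and $h(\nu) \le h(Y_{1,n}) = 0$ by Lemma~\ref{YmnEntropy} with $m=1$. Each pattern in $L_{[1,N]^2}(Y_{m,n})$ is determined by its $\pi$-image $u_0 \in L_{[1,N]^2}(Y_{1,n})$ together with a label in $\{1,\dots,m\}$ at each of the $k(u_0)$ positions where $u_0$ equals $c$; by the entropy chain rule and Proposition~\ref{EntropyCardUB},
\begin{equation*}
H_{\mu,N} \le H_{\nu,N} + \sum_{u_0} \nu(u_0)\, k(u_0) \log m = H_{\nu,N} + N^2 \rho \log m,
\end{equation*}
where $\rho := \nu(\{y : y(0) = c\})$. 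Dividing by $N^2$ and sending $N \to \infty$ yields $h(\mu) \le \rho \log m$. On the other hand, since $\nu(E) = 0$, the pointwise ergodic theorem together with the definition of $\alpha_n$ gives $\rho \le \alpha_n$. Because $h(\mu) = \alpha_n \log m$, every inequality must be an equality: $\rho = \alpha_n$, and by the equality case of Proposition~\ref{EntropyCardUB}, conditioned on its $Y_{1,n}$-projection the measure $\mu$ distributes labels independently and uniformly over $\{1,\dots,m\}$ at $\nu$-a.e.\ $c$-position.

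Finally, I would use the hierarchical structure to pin $\nu$ down. In any nonexceptional point, the level-$N$ corners form a $4^{-(N-n)}$-density subset of the level-$n$ corners (each level-$N$ subsquare contains exactly $4^{N-n}$ level-$n$ subsquares), so shift-invariance gives $\nu\bigl(\{y : y(0) \text{ is a level-}N \text{ corner}\}\bigr) = \alpha_n / 4^{N-n}$. Since the unique level-$N$ subsquare pattern of $Y_{1,n}$ appears in every nonexceptional point precisely at the level-$N$ corners, its $\nu$-measure is $\alpha_n / 4^{N-n}$. Applying Lemma~\ref{hochmanfactorUniqueness} to $\nu$ (with $m=1$) determines $\nu$ uniquely. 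Combined with the conditional-uniformity condition, this forces
\begin{equation*}
\mu(u) = \frac{\alpha_n}{4^{N-n}\, m^{4^{N-n}}}
\end{equation*}
for every level-$N$ subsquare $u$ of $Y_{m,n}$. A second application of Lemma~\ref{hochmanfactorUniqueness}, now to $\mu$, yields uniqueness of $\mu$.

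The main obstacle is the entropy step: carefully setting up the chain rule so that the equality case forces conditional uniformity of the labels, and verifying that the exceptional points contribute zero topological entropy. The hierarchical density identities used in the final step are immediate from the inductive definition of the patterns $P_N$.
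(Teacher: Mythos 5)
Your overall strategy tracks the paper's: push $\mu$ forward to $\nu = \pi\mu$ on the zero-entropy base $Y_{1,n}$, use the entropy chain rule to bound $h(\mu)$ by (density of $c$'s)$\,\cdot \log m$, pin down $\nu$ via the frequency identity $\nu(P_N) = 4^{-(N-n)}\nu(P_n)$ together with Lemma~\ref{hochmanfactorUniqueness}, and force the labels to be conditionally i.i.d.\ uniform. The identification of $\nu$ and the final bookkeeping are fine (your $h(E)=0$ plus affinity argument for $\mu(E)=0$ is a legitimate alternative to the paper's ergodic-theorem argument). The gap is the sentence ``every inequality must be an equality.'' The slack in $H_{\mu,N} \le H_{\nu,N} + N^2\rho\log m$ is exactly $\sum_{u_0}\nu(u_0)\bigl[k(u_0)\log m - H(\mu(\cdot\mid\pi^{-1}(u_0)))\bigr]$, and all that $h(\mu)=\rho\log m$ gives you (via $H_{\mu,N}\ge N^2h(\mu)$) is that this slack is at most $H_{\nu,N}$ for each $N$. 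Since $h(\nu)=0$ only means $H_{\nu,N}=o(N^2)$, not $H_{\nu,N}=0$, you cannot conclude the slack vanishes for any finite $N$; two sequences with the same limit need not agree termwise. So the equality case of Proposition~\ref{EntropyCardUB} is not directly available, and conditional uniformity of the labels does not follow as written. This is the central step of the proof.

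The conclusion is true, and there are two ways to get it. The paper's route: rather than conditioning on the entire projected pattern $w$ of the growing window, partition $[1,N]^2$ according to the level-$N_0$ subsquares of $w$ and bound $H(\mu(\cdot\mid\pi^{-1}(w))) \le F_{N_0}(w)\,h' + O(N)$, where $h' = H(\mu(\cdot\mid\mathcal{S}_{N_0}))$ is a single number attached to a fixed finite configuration. Passing to the limit yields $h(\mu)\le\nu(P_{N_0})\,h'$, and since $\nu(P_{N_0})\le 4^{-N_0+n}\alpha_n$ and $h'\le 4^{N_0-n}\log m$, equality forces $h'=4^{N_0-n}\log m$ exactly, i.e.\ uniformity on $\mathcal{S}_{N_0}$; the equality case survives the limit precisely because $h'$ does not depend on $N$. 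Alternatively, you can repair your own argument with a superadditivity step: setting $D_N := N^2\rho\log m - (H_{\mu,N}-H_{\nu,N})$, tiling $[1,N]^2$ by translates of $[1,N_1]^2$ and using that conditioning on the larger projected pattern only decreases the conditional entropy of the labels on each tile gives $D_N \ge (N/N_1)^2 D_{N_1}$, while $D_N \le H_{\nu,N} = o(N^2)$, so $D_{N_1}=0$ for every $N_1$. One of these fixes needs to be written out.
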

\begin{proof}
 By Lemma~\ref{YmnEntropy}, we have that $h(Y_{m,n}) = \alpha_n \log m$. 

For now, we use the notation that $P_N$ is the unique level-$N$ subsquare in $Y_{1,n}$. For any invariant measure $\nu$ on $Y_{1,n}$ and $N_1 < N_2$, we have that
\begin{equation} \label{Eqn:GoHeels}
\nu(P_{N_1}) = 4^{N_2 - N_1} \nu(P_{N_2}),
\end{equation} 
since $P_{N_2}$ contains $4^{N_2-N_1}$ copies of $P_{N_1}$ and every $P_{N_1}$ is contained in a copy of $P_{N_2}$.\\

\textit{Claim:} there is a unique shift-invariant measure $\nu'$ on $Y_{1,n}$ such that $\nu'(P_n) \geq \alpha_n$, and $\nu'(E) = 0$. Consider any shift-invariant 
$\nu'$. Then by the ergodic theorem, there exists an integrable $g : Y_{1,n} \to \mathbb{R}$ such that
\[
\frac{1}{(2k+1)^2} \sum_{t \in [-k,k]^2} \chi_{[P_n]}(\sigma_t(x)) \underset{\nu'-\rm{a.e.}}{\longrightarrow} g,
\]
where $\int g \ d\nu' = \int \chi_{[P_n]} \ d\nu' = \nu'(P_n)$. Clearly $g = 0$ on all exceptional points, and is bounded from above by $\alpha_n$ due to its definition as the maximal upper limiting frequency of occurences of $P_n$ in points of $X_1$. Therefore, if $\nu'(P_n) \geq \alpha_n$, then $\nu'(E) = 0$. Since $\nu'(P_N)$ is determined by (\ref{Eqn:GoHeels}) for each $N$, we have that $\nu'$ is uniquely determined by Lemma~\ref{hochmanfactorUniqueness}. Thus, there is at most one measure with the desired properties.

Let us now show that there is at least one such measure. By definition of $\alpha_n$, there exists a sequence $\{x_N\}_N$ in $Y_{1,n}$ such that the frequency of $P_n$ in $x_N([-N,N]^2)$ tends to $\alpha_n$. Let
\begin{equation*}
\nu_N = \frac{1}{(2N+1)^2} \sum_{p \in [-N,N]^2} \delta_{\sigma_p(x_N)},
\end{equation*}
and let $\nu'$ be any subsequential (weak$^*$) limit of the sequence $\{\nu_N\}_N$. Then $\nu'$ is an invariant measure on $Y_{1,n}$ such that $\nu'(P_n) = \lim_N \nu_N(P_n)$, which is $\alpha_n$ by our choice of $\{x_N\}_N$.  Combining this fact with the result of the previous paragraph, we obtain that there is exactly one shift-invariant measure $\nu'$ on $Y_{1,n}$ such that $\nu'(P_n) \geq \alpha_n$, and $\nu'(E) = 0$, which establishes the claim.\\

%Let $\nu$ be an ergodic measure on $Y_{1,n}$. If the set of exceptional points has positive $\nu$-measure, then $\nu$ is supported on a finite set of exceptional points, and therefore $\nu(P_n) = 0$. Thus, if $\nu(P_n)>0$, then the $\nu$-measure of the set of non-exceptional points must be $1$. Since $\alpha_n$  is the limiting frequency of level-$n$ subsquares within any non-exceptional point, we see that $\nu(P_n) = \alpha_n$. By Lemma~\ref{hochmanfactorsUniqueness}, there is a unique ergodic measure $\nu$ on $Y_{1,n}$ such that $\nu(P_n) >0$, and furthermore $\nu(P_n) = \alpha_n$. 

%Let $S \subset X_1$ be the set of points $x$ such that the origin is the lower-left corner of a level-$n$ square in $x$. Let $A$ be the set of points $x$ in $X_1$ such that $x$ contains a blank symbol. Since $A$ is an invariant set, we must have $\nu(A) \in \{0,1\}$ for any ergodic measure $\nu$. Note that there is a unique ergodic measure $\nu$ on $X_1$ such that $\nu(A) = 1$. Also, we have that $\bigcup_{p \in \mathbb{Z}^2} \sigma_p(S) = A$ (since a point in $X_1$ contains a blank symbol if and only if it contains a level-$n$ subsquare). Hence, $\nu$ is the unique measure on $X_1$ such that $\nu(S) >0$. Thus, we have that $\nu(S) = \alpha_n$.

Recall that $\pi : Y_{m,n} \to Y_{1,n}$ is the $1$-block factor map that projects $(c,i)$ to $c$ and acts as the identity on all other symbols. For a pattern $w$ in $Y_{1,n}$, we let $\pi^{-1}(w)$ denote the set of patterns $b$ in $Y_{m,n}$ such that $\pi(b) = w$. %Also, for notational convenience in the remainder of this proof, we do not distinguish between a pattern $w$ and its associated cylinder set $[w]$. 

Let $\mu'$ be the measure in $\mathcal{M}(Y_{m,n})$ such that for each pattern $b$ in $\pi^{-1}(w)$, we have
\begin{equation} \label{DefOfMu}
 \mu'(b) = \nu'(w) m^{-F(w)},
\end{equation}
where $F(w)$ is the number of occurrences of the symbol $c$ in $w$. In words, $\mu'$ is the measure that projects to $\nu'$ under $\pi$ and, conditioned on $\pi^{-1}(w)$, independently gives uniform probability to each of the symbols $(c,i)$, for $i = 1, \dots, m$. 

%Let us now show that if $\mu$ is in $\mathcal{M}(Y_{m,n})$ and $\mu \neq \mu'$, then $h(\mu) < \alpha_n \log m = h(Y_{m,n})$, and hence $\mu$ is not a measure of maximal entropy for $Y_{m,n}$. 

Let $\mu$ be any measure in $\mathcal{M}(Y_{m,n})$ such that $\mu \neq \mu'$. We will show that $h(\mu) < \alpha_n \log m = h(Y_{m,n})$, and hence $\mu$ is not a measure of maximal entropy for $Y_{m,n}$. Let $\nu = \pi \mu$. For any measure $\tau$ on a subshift, recall that $H_{\tau,N}$ denotes the entropy of $\tau$ with respect to the partition into patterns of shape $[1,N]^2$. Using conditional probabilities and logarithmic rules, we have
\begin{align}
\begin{split} \label{Eqn:Ent1}
 H_{\mu,N} & = \sum_{b \in L_{[1,N]^2}(Y_{m,n})} -\mu(b) \log \mu(b) \\
           & = \sum_{w \in L_{[1,N]^2}(Y_{1,n})} \sum_{b \in \pi^{-1}(w)} - \mu \bigl( \pi^{-1}(w) \bigr) \mu \bigl( b \mid \pi^{-1}(w) \bigr) \log \Big(\mu \bigl( \pi^{-1}(w) \bigr) \mu \bigl( b \mid \pi^{-1}(w) \bigr)\Big) \\
					 & = \sum_{w \in L_{[1,N]^2}(Y_{1,n})} - \mu( \pi^{-1}(w) ) \log \mu( \pi^{-1}(w) ) \sum_{b \in \pi^{-1}(w)} \mu( b \mid \pi^{-1}(w) ) \\
                                              & \quad \quad + \sum_{w \in L_{[1,N]^2}(Y_{1,n})}  \mu( \pi^{-1}(w) ) \sum_{b \in \pi^{-1}(w)} - \mu( b \mid \pi^{-1}(w) ) \log \mu( b \mid \pi^{-1}(w))\\
					 & = \sum_{w \in L_{[1,N]^2}(Y_{1,n})} - \nu( w ) \log \nu( w )  + \sum_{w \in L_{[1,N]^2}(Y_{1,n})}  \nu( w ) H( \mu( \cdot \mid \pi^{-1}(w) ) ) \\
           & = H_{\nu,N} + \sum_{w \in L_{[1,N]^2}(Y_{1,n})} \nu(w) H( \mu( \cdot \mid \pi^{-1}(w) ) ).
\end{split}
\end{align}

Let $N_0 > n$ be in $\mathbb{N}$.
Let $\mu( \cdot \mid \mathcal{S}_{N_0})$ be the conditional probability measure on $\mathcal{S}_{N_0}$ induced by $\mu$: for $b$ in $\mathcal{S}_{N_0}$, let
\begin{equation*}
 \mu( b \mid \mathcal{S}_{N_0} ) = \frac{\mu(b)}{\sum_{b' \in \mathcal{S}_{N_0}} \mu(b')}.
\end{equation*}
%Since $\mu' \neq \mu$, there must exist $N_0$ such that $\mu'( \cdot \mid \mathcal{S}_{N_0})$ is not uniformly distributed on $\mathcal{S}_{N_0}$ (by Lemma~\ref{hochmanfactorUniqueness}). 
Let $h'$ be the entropy of $\mu( \cdot \mid \mathcal{S}_{N_0})$, i.e. $h' = H(\mu( \cdot \mid \mathcal{S}_{N_0}))$. 
%Since $\mu'$ is not uniform on $\mathcal{S}_{N_0}$, we have that $h' < 4^{N_0-n} \log m$ (by Proposition~\ref{EntropyCardUB} and the fact that the uniform distribution on $\mathcal{S}_{N_0}$ has entropy $4^{N_0-n} \log m$).

For a pattern $w$ in $L_{[1,N]^2}(Y_{1,n})$, we define a partition $\mathcal{P}(w)$ of $[1,N]^2$ as follows. Let $\mathcal{P}_1(w)$ consist of all $S$ for which $w(S)$ is a level-$N_0$ subsquare. Let $\mathcal{P}_2(w)$ consist of all $\{t\}$ so that $w(t) = c$, but this occurrence of $c$ is not contained in level-$N_0$ subsquare within $w$. Finally, let $\mathcal{P}_3(w)$ consist of $\{t\}$ for all $t$ such that $t$ is not contained in any level-$N_0$ subsquare within $w$ and $w(t) \neq c$. Then define $\mathcal{P}(w) = \mathcal{P}_1(w) \cup \mathcal{P}_2(w) \cup \mathcal{P}_3(w)$, and note that $\mathcal{P}(w)$ is a partition of $[1,N]^2$. For $C$ in $\mathcal{P}(w)$, recall that $\mu_C( \cdot \mid \pi^{-1}(w) )$ denotes the projection of $\mu( \cdot \mid \pi^{-1}(w) )$ onto the coordinates in $C$.  Then Proposition \ref{EntropyProdUB} gives
\begin{align}
\begin{split} \label{Dook}
H( \mu( \cdot \mid \pi^{-1}(w)) ) & \leq \sum_{C \in \mathcal{P}(w)} H( \mu_C( \cdot \mid \pi^{-1}(w)))\\
 & = \sum_{C \in \mathcal{P}_1(w)} H( \mu_C( \cdot \mid \pi^{-1}(w) ) ) \\
 & \quad \quad  + \sum_{\{t\} \in \mathcal{P}_2(w)} H( \mu_{\{t\}}( \cdot \mid \pi^{-1}(w) ) ) \\
 & \quad \quad + \sum_{\{t\} \in \mathcal{P}_3(w)} H( \mu_{\{t\}} ( \cdot \mid \pi^{-1}(w) ) ).
\end{split}
\end{align}

We now bound each of the three terms in the right-most expression of (\ref{Dook}).
For $C$ in $\mathcal{P}_1(w)$, we have that $\mu_C( \cdot \mid \pi^{-1}(w) ) = \mu( \cdot \mid \mathcal{S}_{N_0} )$. For notation, let $F_{N_0}(w)$ be the number of level-$N_0$ subsquares completely contained in $w$. Then by definition of $h'$, we have
\begin{equation} \label{DookBd1}
 \sum_{C \in \mathcal{P}_1(w)} H( \mu_C( \cdot \mid \pi^{-1}(w))) = |\mathcal{P}_1(w)| \cdot h' = F_{N_0}(w) \cdot h'.
\end{equation}
Now let $F_{\partial}(w)$ be the number of occurrences of the symbol $c$ in $w$ that appear within $5\cdot 2^{N_0}-4$ of the boundary of $[1,N]^2$; clearly $F_{\partial(w)} \leq 4 (5\cdot 2^{N_0}-4) N$. Note that for any singleton $\{t\}$ in $\mathcal{P}_2(w)$, we have that $t$ lies within $5 \cdot 2^{N_0}-4$ of the boundary of $[1,N]^2$.
Therefore, by Proposition~\ref{EntropyCardUB}, we see that
\begin{equation} \label{DookBd2}
 \sum_{\{t\} \in \mathcal{P}_2(w)} H( \mu_{\{t\}}( \cdot \mid \pi^{-1}(w) ) ) \leq |\mathcal{P}_2(w)| \log m \leq  F_{\partial}(w) \log m.
\end{equation}
Finally, for $\{t\}$ in $\mathcal{P}_3(w)$, we have that $H( \mu_{\{t\}}( \cdot \mid \pi^{-1}(w) ) ) = 0$, since all patterns in $\pi^{-1}(w)$ have the same symbol at location $t$.
Now combining (\ref{Dook}) - (\ref{DookBd2}), we see that
\begin{equation} \label{Rockies}
H( \mu( \cdot \mid \pi^{-1}(w) ) ) \leq F_{N_0}(w) \cdot h' + F_{\partial}(w) \log m.
\end{equation}
Combining (\ref{Eqn:Ent1}) and (\ref{Rockies}), we obtain
\begin{align}
\begin{split} \label{EntEqn}
 H_{\mu,N} & =  H_{\nu,N} + \sum_{w \in L_{[1,N]^2}(Y_{1,n})} \nu(w) H( \mu( \cdot \mid \pi^{-1}(w) ))\\
           & \leq H_{\nu,N} + \sum_{w \in L_{[1,N]^2}(Y_{1,n})} \nu(w) \biggl( F_{N_0}(w) \cdot h' + F_{\partial}(w) \log m \biggr).
\end{split}
\end{align}
Let $I_{N_0}(x)$ be the indicator function of the set of $x$ in $Y_{1,n}$ such that $x([1,5 \cdot 2^{N_0}-4]^2) = P_{N_0}$. Dividing by $N^2$ in (\ref{EntEqn}) and re-writing, we have that
\begin{align}
\begin{split} \label{Eqn:Ent5}
 \frac{1}{N^2} H_{\mu,N} & \leq \frac{1}{N^2} H_{\nu,N} +  h' \sum_{w \in L_{[1,N]^2]}(X_1)} \nu(w) \frac{F_{N_0}(w)}{N^2} + \frac{4 (5\cdot 2^{N_0}-4) N}{N^2} \log m \\
 & \leq \frac{1}{N^2} H_{\nu,N} +  h' \int \frac{1}{N^2} \sum_{t \in [1,N]^2} I_{N_0}(\sigma_t(x)) \, d\nu(x) + \frac{4 (5\cdot 2^{N_0}-4) }{N} \log m \\
 & = \frac{1}{N^2} H_{\nu,N} +  h' \nu(P_{N_0}) + \frac{4 (5\cdot 2^{N_0}-4) }{N} \log m.
\end{split}
\end{align}
Letting $N$ tend to infinity in (\ref{Eqn:Ent5}) and using that $h(\nu) = 0$, we see that
\begin{equation} \label{Eqn:MainPoint}
h(\mu) \leq h(\nu) + \nu(P_{N_0}) \cdot h' = \nu(P_{N_0}) \cdot h'.
\end{equation}

By (\ref{Eqn:GoHeels}) and the above claim, we have
\begin{equation} \label{Eqn:MainPoint2}
 \nu(P_{N_0}) = 4^{-N_0+n} \nu(P_n) \leq 4^{-N_0+n} \alpha_n,
\end{equation}
with equality if and only if $\nu = \nu'$.
Furthermore, by definition of $h'$ and Proposition \ref{EntropyCardUB}, we have that
\begin{equation} \label{Eqn:MainPoint3}
 h' \leq \log |\mathcal{S}_{N_0}| = 4^{N_0-n} \log m,
\end{equation}
with equality if and only if $\mu( \cdot \mid \mathcal{S}_{N_0} )$ is uniform on $\mathcal{S}_{N_0}$.
Combining (\ref{Eqn:MainPoint}), (\ref{Eqn:MainPoint2}), and (\ref{Eqn:MainPoint3}), we have that
\begin{equation} \label{Eqn:MainPoint4}
 h(\mu) \leq \alpha_n \log m,
\end{equation}
with equality only if $\nu = \nu'$ and $\mu( \cdot \mid \mathcal{S}_{N_0} )$ is uniform on $\mathcal{S}_{N_0}$, for all $N_0 > n$.

Hence, if $\nu \neq \nu'$, then (\ref{Eqn:MainPoint4}) gives that $h(\mu) < \alpha_n \log m$.
If $\nu = \nu'$, then Proposition \ref{hochmanfactorUniqueness}, (\ref{DefOfMu}), and the fact that $\mu \neq \mu'$ together imply that there exists $N_0 > n$ such that $\mu( \cdot \mid \mathcal{S}_{N_0} )$ is not uniform on $\mathcal{S}_{N_0}$. In this case, we again obtain that $h(\mu) < \alpha_n \log m$ by (\ref{Eqn:MainPoint4}).
Taken together, these cases  show that if $\mu \neq \mu'$, then $h(\mu) < \alpha_n \log m = h( Y_{m,n})$.
As $Y_{m,n}$ is expansive and therefore has a measure of maximal entropy, we obtain that $\mu'$ is the unique measure of maximal entropy on $Y_{m,n}$.
Thus, $Y_{m,n}$ is intrinsically ergodic.
%\kmmargin{New proof ends here.}
\end{proof}

\begin{lemma} \label{hochmanfactorFactors}
 Suppose $\varphi : X \to Z$ is a factor map with radius $n$ such that the cardinality of the set of $\varphi$-images of level-$2n$ subsquares is $m$. Then there exist factor maps $\psi_1 : X \to Y_{m,2n}$ and $\psi_2 : Y_{m,2n} \to Z$ such that $\varphi = \psi_2 \circ \psi_1$.
\end{lemma}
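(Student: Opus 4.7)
The plan is to have $\psi_1$ record, at each lower-left corner of a level-$2n$ subsquare of $x$, which of the $m$ distinct $\varphi$-equivalence classes the subsquare belongs to, and to otherwise project $x$ into $\mathcal{B}_m$. Enumerate the $m$ distinct $\varphi$-images of level-$2n$ subsquares of $X$ as $r_1, \dots, r_m$, and for each level-$2n$ subsquare $P \in L(X)$ let $\sigma(P) \in \{1, \dots, m\}$ be the unique index with $\varphi(P) = r_{\sigma(P)}$. Set $\psi_1(x)(v) = (c, \sigma(P))$ when $v$ is the lower-left corner of a level-$2n$ subsquare $P$ of $x$, and otherwise set $\psi_1(x)(v) = x(v)$ (viewed as an element of $\mathcal{B}_m$ via the natural alphabet inclusion). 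Since the positions of level-$2n$ lower-left corners of $x$ can be recognized from a bounded window, $\psi_1$ is a sliding block code; by construction $\pi \circ \psi_1$ is the canonical factor $X \to Y_{1,2n}$, so $\psi_1(X) \subseteq Y_{m,2n}$, and surjectivity follows because the class label at each level-$2n$ corner of $X$ can be realized independently.

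For $\psi_2$, the key observation is a dichotomy coming directly from Proposition~\ref{hochmanprops}(1). Fix $x \in X$ and $v \in \mathbb{Z}^2$. Either (a) $v + [-n,n]^2 \subset P$ for some level-$2n$ subsquare $P$ of $x$, in which case $\varphi(x)(v) = \varphi(P)(v - v_0) = r_{\sigma(P)}(v - v_0)$ depends only on $\sigma(P)$ and the offset of $v$ from the lower-left corner $v_0$ of $P$; or (b) no such $P$ exists, in which case the contrapositive of Proposition~\ref{hochmanprops}(1) implies $v + [-n,n]^2$ contains no blanks of $x$, so every symbol in that window is an arrow dictated by the hierarchy and $\varphi(x)(v)$ depends only on the local hierarchical position of $v$.

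Accordingly, I define $\psi_2(y)(v)$ by reading $y$ on a bounded window to determine which case applies and, in case (a), extracting the label $(c,i)$ at the lower-left corner $v_0$ of the level-$2n$ subsquare containing $v$ in its $n$-interior; then outputting $r_i(v - v_0)$ in case (a) and the arrow value forced by the hierarchy in case (b). This $\psi_2$ is a sliding block code, and the dichotomy immediately gives $\psi_2(\psi_1(x))(v) = \varphi(x)(v)$ for every nonexceptional $x$ and every $v$; continuity (or the sliding-block definitions directly) extends the equality to all of $X$. Surjectivity of $\psi_2$ onto $Z$ is automatic from $\psi_2(Y_{m,2n}) = \psi_2(\psi_1(X)) = \varphi(X) = Z$.

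The main technical obstacle is case (b): one must verify that $\varphi(x)(v)$ is genuinely independent of the subsquare choices (blank labels and sub-block labels) in $x$ whenever the radius-$n$ window around $v$ is not contained in a single level-$2n$ subsquare. Proposition~\ref{hochmanprops}(1) supplies exactly this — the window is then blank-free, and hence depends only on the deterministic arrow structure — and is also the reason the lemma is stated at level $2n$ rather than level $n$.
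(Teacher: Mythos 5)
Your proof is correct and follows essentially the same route as the paper: $\psi_1$ records the $\varphi$-class of each level-$2n$ subsquare at its lower-left corner, and $\psi_2$ is defined via exactly the dichotomy the paper uses from Proposition~\ref{hochmanprops}(1) (window contained in a level-$2n$ subsquare versus blank-free window, handled by replacing the $(c,i)$ symbols with SW corner arrows before applying $\varphi$). The only cosmetic slip is that for $X = X_k$ with $k>1$ the map on the remaining symbols is the projection collapsing the $k$ labeled blanks to the single blank of $\mathcal{B}_m$, not an alphabet inclusion.
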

\begin{proof}
 Suppose the set of $\varphi$-images of level-$2n$ subsquares is $\{w_i\}_{i=1}^m$. For $x$ in $X$, define $\psi_1(x)$ as follows. If $x(v)$ is the lower-left corner of level-$2n$ subsquare whose $\varphi$-image is $w_i$, then let $(\psi_1(x))(v) = (c,i)$. If $x(v)$ is a blank, then let $(\psi_1(x))(v)$ be the blank symbol in $Y_{m,n}$. Otherwise, let $\psi_1(x)(v) = x(v)$. One may easily verify that $\psi_1$ is a factor map.

For $x$ in $Y_{m,n}$, define $\psi_2(x)$ as follows. Suppose $v + [-n,n]^2$ is contained in a level-$2n$ subsquare $x(u+[1,5 \cdot 2^{2n}-4]^2)$ whose lower-left corner is labeled $(c,i)$. Suppose $v = u+p$. Then let $(\psi_2)(x)(v) = w_i(p)$. If $v + [-n,n]^2$ is not contained in a level-$2n$ subsquare in $x$, then let $(\psi_2(x))(v) = \varphi(\tilde{x})(v)$, where $\tilde{x}$ is a point in $X$ obtained by replacing all symbols $(c,i)$ in $x$ with the SW arrow symbol and arbitrarily choosing blanks. This map is well-defined, since $\varphi$ has radius $n$, and by part (1) of Proposition~\ref{hochmanprops}, if $v + [-n,n]^2$ is not contained in any level-$2n$ subsquare, then $\tilde{x}(v+[-n,n]^2)$ cannot contain any blanks. Furthermore, one may easily check that $\psi_2$ is a shift-commuting continuous map and that $\varphi = \psi_2 \circ \psi_1$, implying that $\psi_2$ is surjective and therefore also a factor map.
\end{proof}

\begin{lemma} \label{hochmanfactorEntropy}
 Suppose $\varphi : X \to Z$ is a factor map with radius $n$ such that the cardinality of the set of $\varphi$-images of level-$2n$ subsquares is $m$. Then $h(Z) = \alpha_{2n} \log m$. 
\end{lemma}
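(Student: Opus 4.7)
The plan is to apply Lemma~\ref{hochmanfactorFactors} to write $\varphi = \psi_2 \circ \psi_1$ with $\psi_1 : X \to Y_{m,2n}$ and $\psi_2 : Y_{m,2n} \to Z$, and then to prove matching upper and lower bounds for $h(Z)$. The upper bound is immediate: since $Z = \psi_2(Y_{m,2n})$ is a topological factor of $Y_{m,2n}$ and topological entropy is non-increasing under factor maps, Lemma~\ref{YmnEntropy} gives $h(Z) \leq h(Y_{m,2n}) = \alpha_{2n} \log m$.

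For the lower bound, I would produce exponentially many distinct patterns in $L(Z)$ by independently varying the labels at level-$2n$ subsquare corners. Mimicking the construction in the proof of Lemma~\ref{YmnEntropy} with $n$ replaced by $2n$, choose a sequence $\{x_N\}$ in $X_1 \subset X$ (viewing $X_1$ inside $X$ by using a single blank label) so that the frequency of level-$2n$ subsquares in $x_N([1,N]^2)$ tends to $\alpha_{2n}$. Since $\psi_2$ has some finite radius $R = R(n)$ and level-$2n$ subsquares have bounded size, after accounting for the $O(N)$ boundary correction the number $F_N$ of level-$2n$ subsquares whose interior (shrunk by $n$) lies in $[1+R, N-R]^2$ still satisfies $F_N/N^2 \to \alpha_{2n}$. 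For each function $\ell$ from these $F_N$ subsquares to $\{1,\ldots,m\}$, define $y_\ell \in L_{[1,N]^2}(Y_{m,2n})$ by labeling the lower-left corner of the $j$-th such subsquare with $(c,\ell(j))$. Then $\psi_2(y_\ell) \in L_{[1+R, N-R]^2}(Z)$; by the explicit formula for $\psi_2$ from the proof of Lemma~\ref{hochmanfactorFactors}, on the interior of any of these level-$2n$ subsquares labeled $(c,i)$ the $\psi_2$-image agrees with $w_i$. Since $w_1,\ldots,w_m$ are pairwise distinct, any two distinct labelings $\ell \neq \ell'$ yield distinct $\psi_2$-images. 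Thus $|L_{[1+R,N-R]^2}(Z)| \geq m^{F_N}$, and taking $N \to \infty$ yields
\[
h(Z) \geq \lim_{N \to \infty} \frac{F_N \log m}{(N - 2R)^2} = \alpha_{2n} \log m.
\]

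The main technical check is the distinctness of the $w_i$ on the interior of a level-$2n$ subsquare, which is precisely the region where $\psi_2$ uses the label data. This is immediate from the hypothesis: since $\varphi$ has radius $n$, the $\varphi$-image of a level-$2n$ subsquare is only well-defined on this interior to begin with, so the $m$ distinct images must already differ there. Combining the matching bounds yields $h(Z) = \alpha_{2n} \log m$.
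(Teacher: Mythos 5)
Your proposal is correct and follows essentially the same route as the paper: decompose $\varphi = \psi_2 \circ \psi_1$ via Lemma~\ref{hochmanfactorFactors}, get the upper bound from monotonicity of entropy under factors together with Lemma~\ref{YmnEntropy}, and get the lower bound by independently varying the $m$ labels at the $\approx \alpha_{2n}N^2$ level-$2n$ subsquare corners and observing that distinct labelings yield distinct images. Your explicit verification that the $w_i$ already differ on the shrunk interiors (where the radius-$n$ map $\varphi$ is defined) and your bookkeeping of the $O(N)$ boundary correction make explicit a step the paper leaves implicit, but the argument is the same.
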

\begin{proof}
By Lemma \ref{hochmanfactorFactors}, there exist factor maps $\psi_1 : X \to Y_{m,2n}$ and $\psi_2 : Y_{m,2n} \to Z$ such that $\varphi = \psi_2 \circ \psi_1$. By the fact that entropy is non-increasing under factor maps and Lemma \ref{YmnEntropy}, we have $h(Z) \leq h(Y_{m,2n}) = \alpha_{2n} \log m$. 

Let us now check the reverse inequality. By definition of $\alpha_{2n}$, there exists a sequence $\{x_N\}_N$ of points in $Y_{1,n}$ such that the frequency of lower-left corners of level-$2n$ subsquares contained in $x_N([1,N]^2)$, denoted here by $F_N(x_N)$, tends to $\alpha_{2n}$.  For each $x_N$, we have that $\pi^{-1}(x_N([1,N]^2))$ contains $m^{F_N(x_N) N^2}$ patterns, each of which has a distinct image under $\psi_2$. Hence
\begin{equation*}
\frac{1}{N^2} \log |L_{[1,N]^2}(Z)| \geq \frac{1}{N^2} \log \biggl( m^{F_N(x_N) N^2} \biggr) =  F_N(x_N) \log m.
\end{equation*}
Letting $N$ tend to infinity gives that $h(Z) \geq \alpha_{2n} \log m$, as desired.
%To see the reverse inequality, note that the number of level-$N$ subsquares in $Y_{m,2n}$ is $m^{4^{N-2n}}$, and each of these subsquares has a distinct $\psi_2$-image in $Z$. 
\end{proof}

\begin{proof}[Proof of Theorem~\ref{goodfactors}]

By part (2) of Proposition~\ref{hochmanprops}, there exist points of $X_1$ with positive upper limiting frequency of blank symbols. Then clearly $h(X_k)$ is at least $\log k$ times that positive number, and so can be made arbitrarily large by choosing $k$ large. We choose any $k > 1$, and, for convenience, we denote $X_k$ simply by $X$. 

Consider any factor map $\phi$ on $X$ with radius $n$ where $h(\phi(X)) > 0$. By Lemma~\ref{hochmanfactorEntropy}, there exist at least $2$ distinct $\phi$-images of level-$2n$ subsquares; call them $w_1, \ldots, w_m$, with $m > 1$.  By Lemma~\ref{hochmanfactorFactors}, there exist factor maps $\psi_1 : X \to Y_{m,2n}$ and $\psi_2 : Y_{m,2n} \to \phi(X)$ such that $\phi = \psi_2 \circ \psi_1$. By Lemmas \ref{YmnEntropy} and \ref{YmnIE}, $Y_{m,2n}$ has entropy $\alpha_{2n} \log m$, and there is a unique invariant probability measure $\mu$ on $Y_{m,2n}$ with entropy $h(\mu) = \alpha_{2n} \log m$.

By Lemma~\ref{hochmanfactorEntropy}, we have that $h(\phi(X)) = \alpha_{2n} \log m$. Since entropy cannot increase under a factor map and $\mu'$ is the only measure on $Y_{m,2n}$ with $h(\mu') \geq \alpha_{2n} \log m$, we see that there is at most one measure on $\phi(X)$ (namely the push-forward $\psi_2 \mu'$) with entropy $\alpha_{2n} \log m$. Using the fact that any subshift has a measure of maximal entropy, we conclude that there is a unique measure of maximal entropy on $\phi(X)$.

\end{proof}

We note that $X_k$ has factors which are not intrinsically ergodic; for instance, if $\psi$ removes the labels of blanks, then $X_1 = \psi(X_k)$ is zero entropy, but not uniquely ergodic (due to the existence of exceptional points), and all of its measures are trivially measures of maximal entropy. If one wants an example where all factors, including zero entropy ones, are intrinsically ergodic, then as mentioned in the introduction, any uniquely ergodic subshift would suffice. There exist uniquely ergodic $\mathbb{Z}^d$ shifts of finite type (see, for instance, \cite{mozes} and \cite{robinson}), but it was shown in \cite{radin} that such SFTs must have topological entropy $0$. 

To summarize, we have examples of $\mathbb{Z}^2$ SFTs for which every positive entropy factor is intrinsically ergodic, and examples of $\mathbb{Z}^2$ SFTs for which every factor is zero entropy and uniquely ergodic. An ideal example would combine the properties of these two, i.e. it would have positive entropy and it would have the property that every factor (including zero entropy factors) is uniquely ergodic. However, we do not yet know of such an example.

\end{document}